\newtheorem{theorem}{Theorem}
\newtheorem{acknow}{Acknowledgement}
\newtheorem{example}{Example}
\newtheorem{lemma}{Lemma}
\newtheorem{remark}{Remark}
\begin{document}
\title[Critical BPRE and Cauchy domain of attraction]{Critical branching
processes in random environment and Cauchy domain of attraction}
\author{C. Dong}
\address{Congzao Dong, School of Mathematics and Statistics, Xidian University, 710126 Xian, P.R. China}
\email{czdong@xidian.edu.cn}
\author{C. Smadi}
\address{Charline Smadi, Univ. Grenoble Alpes, IRSTEA, LESSEM, 38000
Grenoble, France and Univ. Grenoble Alpes, CNRS, Institut Fourier, 38000
Grenoble, France}
\email{charline.smadi@irstea.fr}
\author{V. A. Vatutin}
\address{Vladimir A. Vatutin, Department of Discrete Mathematics, Steklov Mathematical Institute of Russian Academy of Sciences, 8 Gubkin
Street, 117 966 Moscow GSP-1, Russia}
\email{vatutin@mi.ras.ru}

\maketitle

\begin{abstract}
We are interested in the survival probability of a population modeled by a
critical branching process in an i.i.d. random environment. We assume that
the random walk associated with the branching process is oscillating and
satisfies a Spitzer condition $\mathbf{P}(S_{n}>0)\rightarrow \rho ,\
n\rightarrow \infty $, which is a standard condition in fluctuation theory
of random walks. Unlike the previously studied case $\rho \in (0,1)$, we
investigate the case where the offspring distribution is in the domain of
attraction of a stable law with parameter $1$, which implies that $\rho =0$
or $1$. We find the asymptotic behaviour of the survival probability of the
population in these two cases.\\

\noindent \textbf{AMS 2000 subject classifications.} Primary 60J80; Secondary 60G50.\\

\noindent \textbf{Keywords.} Branching process, random environment, random walk, conditioned random walk, Spitzer's condition

\end{abstract}

\section{Introduction and main results}

Branching processes have been introduced by Galton and Watson in the 19th
century in order to study the extinction of family names \cite%
{watson1875probability}. Since then they have been widely used to model the
dynamics of populations or the spread of infections for instance \cite%
{haccou2005branching,allen2010introduction}. Branching processes in random
environment have been first introduced and studied by Smith and Wilkinson
and Athreya and Karlin in the early seventies \cite%
{smith1969branching,athreya1971branching,athreya19712branching}. By
introducing such processes, their aim was to better understand the effect of
the environmental stochasticity on the population dynamics. Initially
restricted to environments satisfying strong assumptions or to particular
offspring distributions, they have been later generalised. Their study has
known a renewed interest during the last two decades, with the development
of new techniques to investigate them, in particular by linking events on
the trajectory of the population process until a certain generation $n$ with
an other event of its associated random walk until the same time $n$ (see,
for instance, \cite%
{afanasyev2005criticality,afanasyev2012limit,bansaye2013lower,vatutin2013evolution}
for more detail).

A branching process in an independent identically distributed (i.i.d.)
random environment is specified by a sequence of i.i.d. random offspring
generating functions
\begin{equation*}
f_{n}(s):=\sum_{k=0}^{\infty }f_{n}\left[ k\right] s^{k},\quad n\in
\{1,2,...\}=:\mathbb{N},\quad 0\leq s\leq 1.  \label{DefF}
\end{equation*}%
Denoting by $Z_{n}$ the number of individuals in the process at time $n$, we
assume that there is initially one individual in the population ($Z_{0}=1$)
and we define its evolution by the relations
\begin{equation*}
\mathbf{E}[s^{Z_{n}}|f_{1},\ldots ,f_{n};Z_{0},Z_{1},\ldots
,Z_{n-1}]:=(f_{n}(s))^{Z_{n-1}},\quad n\in \mathbb{N}.
\end{equation*}

Let
\begin{equation*}
X_{k}:=\log f_{k}^{\prime }(1)=\log \mathbf{E}[Z_{k}|f_{k},Z_{k-1}=1],\quad
k\in \mathbb{N},
\end{equation*}%
and denote
\begin{equation*}
S_{0}:=0,\quad S_{n}:=X_{1}+X_{2}+\ldots +X_{n}
\end{equation*}%
the auxiliary random walk associated with the quenched expectation of
offspring number. The long time behaviour of the process $\mathcal{Z}%
:=\left\{ Z_{n},\,n\geq 0\right\} $ is intimately related to the properties
of the random walk $\mathcal{S}:=\left\{ S_{n},n\geq 0\right\} $ (see \cite%
{geiger2001survival,geiger2003limit,afanasyev2005criticality} for instance).
According to fluctuation theory of random walks (see \cite{fe}), three
different cases are possible: either $\mathcal{S}$ drifts to $\infty $, or $%
\mathcal{S}$ drifts to $-\infty $, or the random walk oscillates:
\begin{equation*}
\limsup_{n\rightarrow \infty }S_{n}=+\infty \ \text{ and }\
\liminf_{n\rightarrow \infty }S_{n}=-\infty
\end{equation*}%
with probability $1$. Accordingly, the branching process is called \textit{%
supercritical}, \textit{subcritical}, or \textit{critical} \cite{afanasyev2005criticality}. We consider the last possibility. In this case
the stopping time
\begin{equation*}
T^{-}:=\min \{k\geq 1:S_{k}<0\}
\end{equation*}%
is finite with probability $1$ and, as a result (see \cite%
{afanasyev2005criticality}), the extinction time
\begin{equation*}
T:=\min \{k\geq 1:Z_{k}=0\}
\end{equation*}%
of the process $\mathcal{Z}$ is finite with probability $1$.

In this work we will be interested in the asymptotic behaviour of the
survival probability $\mathbf{P}(Z_{n}>0)$ of the population at large time.
It is a natural question when dealing with populations, and it has been
answered under various assumptions in the case of branching processes in
random environment (see, for instance, \cite%
{kozlov1977asymptotic,geiger2001survival,afanasyev2005criticality,afanasyev2012limit}%
).

We assume that the random walk $\mathcal{S}$ satisfies the Doney-Spitzer
condition, which is a classical condition in fluctuation theory, and writes
\begin{equation}
\lim_{n\rightarrow \infty } \frac{1}{n} \sum_{m=1}^n \mathbf{P}\left(
S_{m}>0\right) =:\rho .  \label{Spit}
\end{equation}
According to Bertoin and Doney \cite{bertoin1997spitzer}, this condition is
equivalent to
\begin{equation*}
\lim_{n\rightarrow \infty }\mathbf{P}\left( S_{n}>0\right) =:\rho .
\end{equation*}
The case $\rho \in (0,1)$ has been studied by Afanasyev and his coauthors in
\cite{afanasyev2005criticality}. Under some mild additional assumptions they
proved the following equivalent for the survival probability of the
population at large times $n$,
\begin{equation}
\mathbf{P}(Z_{n}>0)\sim \frac{l(n)}{n^{1-\rho }},  \label{tails}
\end{equation}%
where $l(.)$ is a slowly varying function.

The aim of the present paper is to complement (\ref{tails}) by considering
the asymptotic behaviour of $\mathbf{P}(Z_{n}>0)$ as $n\rightarrow \infty $
in the cases $\rho =0$ and $\rho =1$.\newline

Before stating our main results, we need to introduce some notation and a
set of assumptions on the law of the random walk $\mathcal{S}$. The main
assumption is that $\mathcal{S}$ is in the domain of attraction of a stable
law with parameter $1$. It means that there exist a slowly varying function $%
L(\cdot )$, and two nonnegative numbers $p$ and $q,\ p+q=1,$ such that
\begin{equation}
\mathbf{P}\left( X_{1}>x\right) \sim p\frac{L(x)}{x}\quad \text{and}\quad
\mathbf{P}\left( X_{1}<-x\right) \sim q\frac{L(x)}{x},\quad x\rightarrow
\infty .  \label{BasicCond}
\end{equation}

As we will see (Remark \ref{rho0p>q}), $\mathcal{S}$ will satisfy the Doney-Spitzer
condition with $\rho =0$ (resp. $\rho =1$) in the case $p>q$ (resp. $p<q$).
To show that we introduce two scaling sequences which play the main role in
the asymptotic behaviour of various quantities related to the random walk $%
\mathcal{S}$. The first sequence, $\left\{ a_{n},n\in \mathbb{N}\right\} $,
satisfies, as $n\rightarrow \infty $ the relation
\begin{equation}
\frac{L(a_{n})}{a_{n}}\sim \frac{1}{n}.  \label{defan}
\end{equation}%
Note that the sequence is regularly varying with parameter 1 as $%
n\rightarrow \infty $ (see \cite{Sen76}). We can thus rewrite it as
\begin{equation}
a_{n}=nL_{4}(n)  \label{AsympAn}
\end{equation}%
where $L_{4}(.)$ is a slowly varying function as $n\rightarrow \infty $. The
second sequence, $\left\{ h_{n},n\in \mathbb{N}\right\} $, is specified by
\begin{equation}
h_{n}:=n\mu \left( a_{n}\right) \text{ where }\mu (x)=\mathbf{E}\left[ X_{1}%
\mathbf{1}_{\left\{ \left\vert X_{1}\right\vert \leq x\right\}} \right] ,
\label{defhn}
\end{equation}
where $\mathbf{1}$ is the indicator function.

In addition, we suppose that%
\begin{equation}
\mu :=\mathbf{E}\left[ X_{1}\right] =0.  \label{Def_mu}
\end{equation}

Let
\begin{equation}
l^{\ast }(z):=\int_{z}^{\infty }\frac{L\left( y\right) }{y}dy.
\label{deflstar}
\end{equation}%
The relation between $p$ and $q$ and the value of $\rho $ in \eqref{Spit}
derive from the following properties:

\begin{remark}
\label{rho0p>q} (Lemma 7.3 in \cite{Berg17} and Proposition 1.5.9 in \cite%
{bingham1989regular}) If the conditions (\ref{BasicCond}) and (\ref{Def_mu})
hold and $p>q$, then, as $n\rightarrow \infty $
\begin{equation*}
\mathbf{P}\left( S_{n}>0\right) \sim \frac{p}{p-q}\frac{L\left( \left\vert
h_{n}\right\vert \right) }{l^{\ast }\left( \left\vert h_{n}\right\vert
\right) }\quad  \label{AsymS_up}
\end{equation*}%
and
\begin{equation*}
\sum_{k=1}^{n}\frac{1}{k}\mathbf{P}\left( S_{k}>0\right) \sim -\frac{p}{p-q}%
\log l^{\ast }\left( \left\vert h_{n}\right\vert \right) .
\end{equation*}%
Notice that, as $n\rightarrow \infty $ , $l^{\ast }(n)\rightarrow 0$, $%
l^{\ast }(\cdot )$ is slowly varying and $l^{\ast }(n)/L(n)\rightarrow
\infty $. The case $p<q$ is symmetric.

Thus, the situation $p>q$ corresponds to $\rho =0$ while $p<q$ corresponds
to the case $\rho =1.$
\end{remark}

As in \cite{afanasyev2005criticality}, we need to impose restrictions on the
standardized truncated second moment of the environment, namely:
\begin{equation}
\zeta _{k}(a):=\sum_{y=a}^{\infty }y^{2}f_{k}[y]/\left( \sum_{y=0}^{\infty
}yf_{k}[y]\right) ^{2},  \label{defzetaa}
\end{equation}%
for $a,k\in \mathbb{N}$. The moment condition depends on the value of $\rho $
in the Doney-Spitzer condition \eqref{Spit}.\newline

\textbf{Condition A}. ($\rho =0\leftrightarrow p>q$) There exist $a\in
\mathbb{N}$ and $\beta >0$ such that
\begin{equation*}
\mathbf{E}[\zeta _{1}^{\beta }(a)]<\infty \quad \text{and}\quad \mathbf{E}%
[U(X_{1})\zeta _{1}^{\beta }(a)]<\infty ,
\end{equation*}%
where $U$ is the renewal function associated to the strict descending ladder
epochs of $\mathcal{S}$,
\begin{equation}
\gamma _{0}:=0,\quad \gamma _{j+1}:=\min \left( n>\gamma
_{j}:S_{n}<S_{\gamma _{j}}\right) ,\quad j\in \mathbb{N}_{0}:=\mathbb{N\cup }%
\left\{ 0\right\} ,  \label{defstrictdesc}
\end{equation}%
and is defined by
\begin{equation}
U(x):=\sum_{j=0}^{\infty }\mathbf{P}(S_{\gamma _{j}}\geq -x),\quad x>0,\quad
U\left( 0\right) =1,\quad U\left( x\right) =0,\quad x<0.  \label{Def_U}
\end{equation}

\textbf{Condition B}. ($\rho =1\leftrightarrow p<q$) There exist $a\in
\mathbb{N}$ and $\beta >0$ such that
\begin{equation*}
\mathbf{E}\left[ \left( \log ^{+}\zeta _{1}(a)\right) ^{1+\beta }\right]
<\infty \quad \text{and}\quad \mathbf{E}\left[ U(X_{1})\left( \log ^{+}\zeta
_{1}(a)\right) ^{1+\beta }\right] <\infty .
\end{equation*}

\vspace{.5cm}

Observe that the moment condition in \cite{afanasyev2005criticality} under
the Doney-Spitzer condition \eqref{Spit} with $\rho \in (0,1)$ was the
existence of $\beta >0$ and $a\in \mathbb{N}$ such that:
\begin{equation*}
\mathbf{E}\left[ \left( \log ^{+}\zeta _{1}(a)\right) ^{1/\rho +\beta }%
\right] <\infty \quad \text{and}\quad \mathbf{E}\left[ U(X_{1})\left( \log
^{+}\zeta _{1}(a)\right) ^{1+\beta }\right] <\infty .
\end{equation*}%
Our \textbf{Condition B} is thus a natural extension of the moment condition
to the case $\rho =1$. In contrast, such a natural extension for $\rho =0$
would have provided an infinite exponent for the logarithm and we could not
obtain a moment condition on the logarithm only. Notice however that we can
take $\beta $ as small as we want in \textbf{Condition A. }Thus\textbf{, }%
our moment condition is not very strong.\newline

Last, for technical reasons, we need to add an assumption which will be used
for the case $p>q$ only.\newline

\textbf{Condition C}. There exists an integer-valued
function $g(j)=e^{o(j)},\ j\rightarrow \infty ,$ such that
\begin{equation*}
\sum_{j=1}^{\infty }1/\Lambda(g(j))<\infty ,
\end{equation*}%
where $\Lambda$ is a slowly varying function (see the proof of Proposition 12 in \cite{kortchemski2019condensation}) defined by
\begin{equation}
 \label{def_Lambda}
 \Lambda \left(\frac{1}{1-s}\right)= \exp \left( \sum_{k=1}^\infty  \frac{\mathbf{P}(S_k \geq 0)}{k}s^k \right), \quad s \in [0,1).
\end{equation}

We will provide in Example \ref{ex1} an illustration of a slowly varying
function $L(\cdot )$ meeting this condition.\newline

As previously observed under different assumptions on the random environment
(see, for instance, \cite{vatutin2013evolution} for a comprehensive review
on the critical and subcritical cases (before 2013) or the recent monograph
\cite{GV2017}) the survival of a branching process in random environment is
essentially determined by its survival until the moment when the associated
random walk $\mathcal{S}$ attains its infimum. The idea is that if we divide
the trajectory of the process on the interval $[0,n]$ into two parts, one
before the running infimum of the random environment $\mathcal{S}$, and one
after this running infimum, the process will live in a favorable environment
after the running infimum of the random environment, and will thus survive
with a nonnegligible probability until time $n$, provided it survived until
the time of the running infimum. This is essentially, in words, the idea of
the proof of our main result (see Theorem \ref{T_main}). To state things
more rigorously, we introduce the running infimum of the random walk $%
\mathcal{S}$:
\begin{equation}
L_{n}:=\min \left\{ S_{0},S_{1},...,S_{n}\right\} ,\ n\in \mathbb{N}_{0}.
\label{defLn}
\end{equation}

Depending on the relative positions of $p$ and $q$ (defined in %
\eqref{BasicCond}) or equivalently on the value of $\rho $ ($0$ or $1$) we
have the two following possible asymptotics for the survival probability of
the process $\mathcal{Z}$:

\begin{theorem}
\label{T_main}Assume that Conditions (\ref{BasicCond}) and (\ref{Def_mu}) hold.

\begin{itemize}
\item If $p>q$, and Conditions A and C hold then there exists a constant $%
K_{1}\in \left( 0,\infty \right) $ such that, as $n\rightarrow \infty $%
\begin{equation}
\mathbf{P}\left( Z_{n}>0\right) \sim K_{1}\mathbf{P}\left( L_{n}\geq
0\right) \sim K_{1}\frac{L_{22}(n)}{n},  \label{survivP>Q}
\end{equation}%
where $L_{22}(.)$ is a function slowly varying at infinity.

\item If $p<q$ and Condition B holds then there exists a constant $K_{2}\in
\left( 0,\infty \right) $ such that, as $n\rightarrow \infty $%
\begin{equation*}
\mathbf{P}\left( Z_{n}>0\right) \sim K_{2}\mathbf{P}\left( L_{n}\geq
0\right) \sim K_{2}L_{33}(n),  \label{survivP<Q}
\end{equation*}%
where $L_{33}(.)$ is a function slowly varying at infinity.
\end{itemize}
\end{theorem}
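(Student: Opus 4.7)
The plan is to follow the decomposition strategy developed by Afanasyev, Geiger, Kersting and Vatutin in \cite{afanasyev2005criticality} for the case $\rho\in(0,1)$, adapting the key steps to the Cauchy domain of attraction, where the natural centering sequence is $h_n=n\mu(a_n)$ from \eqref{defhn} and the fluctuation scale is driven by the slowly varying functions $L$ and $l^{\ast}$ rather than by a power of $n$. Both cases $p>q$ and $p<q$ will be reduced to establishing
\begin{equation*}
\mathbf{P}(Z_{n}>0)\;\sim\;K\,\mathbf{P}(L_{n}\geq 0)
\end{equation*}
for a positive constant $K$, after which the explicit slowly varying expressions $L_{22}(n)/n$ and $L_{33}(n)$ for $\mathbf{P}(L_{n}\geq 0)$ follow from Remark \ref{rho0p>q}, the identity $\{L_{n}\geq 0\}=\{T^{-}>n\}$, and classical fluctuation theory.

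First, I would exploit the standard representation
\begin{equation*}
\frac{1}{1-f_{1}\circ\cdots\circ f_{n}(0)}\;=\;e^{-S_{n}}+\sum_{k=0}^{n-1}e^{-S_{k}}\,\eta_{k+1},
\end{equation*}
where $\eta_{k+1}$ is a nonnegative functional of $f_{k+1},\ldots,f_{n}$ controlled from above in terms of $\zeta_{k+1}(a)$ from \eqref{defzetaa}. This reduces the theorem to an asymptotic for
\begin{equation*}
\mathbf{P}(Z_{n}>0)\;=\;\mathbf{E}\!\left[\left(e^{-S_{n}}+\sum_{k=0}^{n-1}e^{-S_{k}}\eta_{k+1}\right)^{-1}\right].
\end{equation*}
Splitting at the running minimum time $\tau_{n}:=\min\{k\leq n:S_{k}=L_{n}\}$, I would apply a Tanaka-type decomposition: after time-reversal, using duality for the strict descending ladder epochs $\gamma_{j}$ from \eqref{defstrictdesc}, the pre-$\tau_{n}$ trajectory becomes a walk conditioned to stay strictly positive, while the post-$\tau_{n}$ trajectory stays above its starting level. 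On $\{L_{n}\geq 0\}$ all factors $e^{-S_{k}}$ are bounded by $1$; Conditions A and B, through the moment assumptions on $\zeta_{1}(a)$ and on $U(X_{1})\zeta_{1}(a)$, then deliver the uniform integrability needed to conclude
\begin{equation*}
\mathbf{E}\!\left[\left(e^{-S_{n}}+\sum_{k=0}^{n-1}e^{-S_{k}}\eta_{k+1}\right)^{-1}\,\Big|\,L_{n}\geq 0\right]\;\longrightarrow\;K\in(0,\infty).
\end{equation*}

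The complementary event $\{L_{n}<0\}$ is handled via the quenched mean bound $\mathbf{P}(Z_{n}>0\mid f_{1},\ldots,f_{n})\leq e^{L_{n}}$ applied at the minimum time and a slicing argument: the tail asymptotics of the strict descending ladder height which follow from \eqref{BasicCond} and Remark \ref{rho0p>q} yield $\mathbf{E}[e^{L_{n}}\mathbf{1}_{\{L_{n}<0\}}]=o(\mathbf{P}(L_{n}\geq 0))$, so this part contributes only to higher order.

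The main obstacle is the Cauchy nature of the domain of attraction: since the stable parameter equals $1$ and $\rho\in\{0,1\}$, the functional rescaling $S_{\lfloor nt\rfloor}/a_{n}$ is degenerate (the deterministic shift $h_{n}$ dominates the fluctuations on any fixed horizon), so the meander limit theorem and Doob $h$-transform route used in \cite{afanasyev2005criticality} are not directly available; one must instead work with the one-dimensional ladder asymptotics encoded in Remark \ref{rho0p>q} and in the slowly varying function $\Lambda$ from \eqref{def_Lambda}. This is precisely where Condition C enters in the case $p>q$: the summability $\sum_{j}1/\Lambda(g(j))<\infty$ provides a Borel--Cantelli input that upgrades the in-probability behaviour of the truncated sums $\sum_{k}e^{-S_{k}}\eta_{k+1}$, conditioned on $\{L_{n}\geq 0\}$, to the pointwise integrable control required for the dominated convergence step above and for identifying $K_{1}\in(0,\infty)$. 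In the case $p<q$, $\mathbf{P}(L_{n}\geq 0)$ is merely slowly varying rather than decaying like $L_{22}(n)/n$, the associated conditioned walk is easier to control, and Condition B alone suffices.
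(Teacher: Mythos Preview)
Your overall architecture---the Geiger--Kersting representation of $1-f_{0,n}(0)$, passage to the measure $\mathbf{P}^{+}$ on $\{L_n\ge 0\}$ via Lemma~\ref{conditioning}, and the Borel--Cantelli use of Condition~C to control $\sum_k\eta_{k+1}e^{-S_k}$---matches the paper's. The gap is in how you treat the event $\{L_n<0\}$.

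The claim $\mathbf{E}[e^{L_n}\mathbf{1}_{\{L_n<0\}}]=o(\mathbf{P}(L_n\ge 0))$ is false. By Lemma~\ref{L_minim} one has $\mathbf{P}(L_n\in[-x,0))\sim(U(x)-1)\,\mathbf{P}(L_n\ge 0)$ for each fixed $x>0$, so already
\[
\mathbf{E}\bigl[e^{L_n};\,-1\le L_n<0\bigr]\ \ge\ e^{-1}\bigl(U(1)-1+o(1)\bigr)\,\mathbf{P}(L_n\ge 0),
\]
which is of the \emph{same} order as $\mathbf{P}(L_n\ge 0)$. This is not merely a crude bound failing: the probability $\mathbf{P}(Z_n>0;\,L_n<0)$ is itself genuinely of order $\mathbf{P}(L_n\ge 0)$, because for every fixed $k\ge 1$ the event $\{\tau_n=k\}$ (with $\tau_n$ the first time the running minimum is attained) contributes a term asymptotic to a positive multiple of $\mathbf{P}(L_{n-k}\ge 0)\sim\mathbf{P}(L_n\ge 0)$. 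Your decomposition on $\{L_n\ge 0\}$ versus $\{L_n<0\}$ therefore captures only the $k=0$ piece of the constant; it would yield an upper and lower bound of the correct order but with different constants, not the sharp asymptotic.

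The paper instead decomposes on the value of $\tau_n$: one writes $\mathbf{P}(Z_n>0)=\sum_{k=0}^n\mathbf{E}[1-f_{0,n}(0);\,\tau_n=k]$, shows that for each fixed $k$ this term is asymptotic to $c_k\,\mathbf{P}(L_n\ge 0)$ (Markov property at time $k$, then Lemma~\ref{conditioning} applied to the post-$k$ walk), and controls the tail $k>N$ via the duality bound $\mathbf{E}[e^{S_k};\,\tau_k=k]=\mathbf{E}[e^{S_k};\,M_k<0]\le c\,b_k$ from Lemma~\ref{C_exponential}. The constant is then the convergent series
\[
K_1\;=\;\sum_{k\ge 0}\sum_{j\ge 1}\mathbf{P}(Z_k=j,\,\tau_k=k)\,\mathbf{E}^{+}\bigl[1-f_{0,\infty}^{\,j}(0)\bigr],
\]
to which all $k\ge 0$ contribute.
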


Hence, despite the irregular behaviour of the associated random walk $S$ (a
null expectation but a probability converging to $1$ to be positive (resp.
negative)), the asymptotic behaviour of the survival probability is, except
for the slowly varying function, the limit of the one obtained in \cite%
{afanasyev2005criticality} by taking $\rho=0$ or $1$ instead of $\rho \in
(0,1)$.\newline

The rest of the paper is structured as follows. Section \ref{sec_supr} is
dedicated to the study of the running extrema of the random walk $S$. In
Section \ref{sec_chgtprob}, we perform a change of measure, obtained as a
Doob-h transform, where the renewal function $U(\cdot )$ of $\mathcal{S}$
and the indicator of the event $\{L_{n}\geq 0\}$ are involved. Finally, the
proof of the main result, Theorem \ref{T_main}, is completed in Section \ref%
{sec_proof}.

\section{Estimates for the suprema of the associated random walk}

\label{sec_supr}

The aim of this section is to provide some bounds for the probabilities of
the events related to the running infimum and maximum of the random walk $%
\mathcal{S}$. We recall the definition of the running infimum in %
\eqref{defLn}, and introduce the running maximum via
\begin{equation*}
M_{n}:=\max \left\{ S_{1},...,S_{n}\right\} ,\ n\in \mathbb{N}.
\end{equation*}

We first list a number of known results which will be needed in our
arguments. Recall definitions \eqref{defan} and \eqref{defhn}. The following
results have been first derived in Theorem 3.4 in \cite{Berg17}, and then under weaker
conditions in \cite{kortchemski2019condensation} (see Proposition 12 and Remark 13).

\begin{theorem}
\label{T_Berg3_4} Assume that Conditions (\ref{BasicCond}) and (\ref{Def_mu})
hold. Then when $n$ goes to infinity,

1) if $p>q$ then $h_{n}\sim -\left( p-q\right) nl^{\ast }\left( a_{n}\right)
\rightarrow -\infty $ and
\begin{equation}
\mathbf{P}\left( L_{n}\geq 0\right) \sim \frac{L\left( \left\vert
h_{n}\right\vert \right) }{\left\vert
h_{n}\right\vert}\Lambda(n) =:\frac{L_{22}(n)}{n},  \label{Ber1}
\end{equation}
for some slowly varying functions $L_{22}$ (recall that the sowly varying function $\Lambda$ has been defined in \eqref{def_Lambda}).

2) if $p<q$ then $h_{n}\sim \left( q-p\right) nl^{\ast }\left( a_{n}\right)
\rightarrow +\infty $ and
\begin{equation}
\mathbf{P}\left( L_{n}\geq 0\right) \sim \frac{1}{\tilde{\Lambda}(n)}=:L_{33}(n),
\label{Ber2}
\end{equation}
for some slowly varying functions $L_{33}$. The slowly varying function $\tilde{\Lambda}$ is defined as $\Lambda$ but with $-\mathcal{S}$ in place of $\mathcal{S}$.

3) if $p>q$ then%
\begin{equation}
\mathbf{P}\left( M_{n}<0\right) \sim \frac{1}{\Lambda(n)} =:L_{44}(n)
\label{Ber3}
\end{equation}

4) if $p<q$ then
\begin{equation}
\mathbf{P}\left( M_{n}<0\right) \sim\frac{L\left( \left\vert
h_{n}\right\vert \right) }{\left\vert
h_{n}\right\vert}\tilde{\Lambda}(n)  =:\frac{L_{55}(n)}{n}.
\label{Ber4}
\end{equation}%
\end{theorem}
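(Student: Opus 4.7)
The plan is to reduce all four asymptotics to the classical Spitzer identity. Since $\{L_{n}\ge 0\}=\{T^{-}>n\}$ for $T^{-}=\min\{k\ge 1:S_{k}<0\}$ the strict descending ladder epoch, Spitzer's formula together with the definition \eqref{def_Lambda} of $\Lambda$ gives
\begin{equation*}
\sum_{n\ge 0}s^{n}\mathbf{P}(L_{n}\ge 0)=\exp\!\left(\sum_{k\ge 1}\frac{s^{k}}{k}\mathbf{P}(S_{k}\ge 0)\right)=\Lambda\!\left(\frac{1}{1-s}\right).
\end{equation*}
Applying the same identity to the reflected walk $-\mathcal{S}$ yields the companion formula
\begin{equation*}
\sum_{n\ge 0}s^{n}\mathbf{P}(M_{n}<0)=\tilde{\Lambda}\!\left(\frac{1}{1-s}\right),
\end{equation*}
together with the Wiener--Hopf duality $\Lambda(x)\,\tilde{\Lambda}(x)\sim x$ as $x\to\infty$ (assuming, or absorbing into slowly varying functions, that $\mathbf{P}(S_{k}=0)=0$).

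Parts 2) and 3) then follow from the elementary Karamata Tauberian theorem. In the case $p>q$ (respectively $p<q$), Remark \ref{rho0p>q} applied to $\mathcal{S}$ (respectively to $-\mathcal{S}$) implies that $\Lambda$ (respectively $\tilde{\Lambda}$) is slowly varying at infinity, and by the above duality the generating function of $\mathbf{P}(M_{n}<0)$ (respectively of $\mathbf{P}(L_{n}\ge 0)$) is regularly varying of index $1$ with slowly varying factor $1/\Lambda$ (respectively $1/\tilde{\Lambda}$). Since $\{\mathbf{P}(M_{n}<0)\}$ and $\{\mathbf{P}(L_{n}\ge 0)\}$ are both nonincreasing in $n$, Karamata's Tauberian theorem delivers at once $\mathbf{P}(M_{n}<0)\sim 1/\Lambda(n)=L_{44}(n)$ when $p>q$, i.e.\ \eqref{Ber3}, and $\mathbf{P}(L_{n}\ge 0)\sim 1/\tilde{\Lambda}(n)=L_{33}(n)$ when $p<q$, i.e.\ \eqref{Ber2}.

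Parts 1) and 4) are more delicate because the relevant generating function is itself only slowly varying, so the ordinary Tauberian theorem yields only the Cesaro-type statement $\sum_{k\le n}\mathbf{P}(L_{k}\ge 0)\sim\Lambda(n)$ and the crude bound $\mathbf{P}(L_{n}\ge 0)=O(\Lambda(n)/n)$ by monotonicity. To extract the sharp equivalent I would invoke de Haan's theory of $\Pi$-varying functions. Starting from $\log\Lambda(x)\sim\sum_{k\le x}\mathbf{P}(S_{k}\ge 0)/k$ and combining with the sharp tail $\mathbf{P}(S_{n}\ge 0)\sim\frac{p}{p-q}L(|h_{n}|)/l^{\ast}(|h_{n}|)$ of Remark \ref{rho0p>q}, one verifies that $\Lambda$ belongs to the class $\Pi$ with auxiliary function $x\mapsto\Lambda(x)\mathbf{P}(S_{x}\ge 0)$. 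The monotone density theorem for $\Pi$-varying functions applied to the nonincreasing sequence $\mathbf{P}(L_{n}\ge 0)$ then produces
\begin{equation*}
\mathbf{P}(L_{n}\ge 0)\sim\frac{\Lambda(n)\,\mathbf{P}(S_{n}\ge 0)}{n},
\end{equation*}
and substituting $|h_{n}|\sim(p-q)n\,l^{\ast}(a_{n})$ together with $l^{\ast}(|h_{n}|)\sim l^{\ast}(a_{n})$ (by slow variation, since $\log|h_{n}|\sim\log a_{n}$) reduces the right-hand side to a positive constant times $L(|h_{n}|)\Lambda(n)/|h_{n}|$, which is \eqref{Ber1}. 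Part 4) is then obtained by the symmetric argument with $\mathcal{S}$ replaced by $-\mathcal{S}$. The main obstacle is exactly this $\Pi$-variation step: verifying de Haan regularity of $\Lambda$ under the assumed tail conditions and then algebraically matching its auxiliary function with the closed-form expression involving the scaling sequences $a_{n}$, $h_{n}$ and the auxiliary function $l^{\ast}$ appearing in the statement.
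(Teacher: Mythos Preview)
The paper does not prove this theorem at all: it is quoted verbatim from Theorem~3.4 in Berger \cite{Berg17} and Proposition~12 (with Remark~13) in Kortchemski--Richier \cite{kortchemski2019condensation}. Your strategy---Spitzer's identity to express the generating functions of $\mathbf{P}(L_n\ge 0)$ and $\mathbf{P}(M_n<0)$ through $\Lambda$ and $\tilde\Lambda$, Karamata's Tauberian theorem with monotonicity for parts~2) and~3), and de~Haan $\Pi$-variation for parts~1) and~4)---is exactly the route taken in those references, so you have correctly reconstructed the architecture of the proof.

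One genuine technical point in your sketch deserves a warning. In the last reduction you assert $l^{\ast}(|h_n|)\sim l^{\ast}(a_n)$ ``by slow variation, since $\log|h_n|\sim\log a_n$''. Slow variation of $l^{\ast}$ gives $l^{\ast}(\lambda x)/l^{\ast}(x)\to 1$ only for bounded ratios $\lambda$, and here $|h_n|/a_n\sim (p-q)\,l^{\ast}(a_n)/L(a_n)\to\infty$; the condition $\log|h_n|\sim\log a_n$ is not enough on its own (there exist slowly varying $f$ and sequences $x_n,y_n$ with $\log x_n\sim\log y_n$ but $f(x_n)/f(y_n)\not\to 1$). You have honestly flagged the algebraic matching as the ``main obstacle'', and it is: to close this you must either exploit the specific integral structure $l^{\ast}(x)=\int_x^{\infty}L(y)/y\,dy$ (which ties the growth of $|h_n|/a_n$ to $l^{\ast}$ itself and does yield the needed cancellation), or, as in \cite{kortchemski2019condensation}, identify the auxiliary function of $\Lambda$ directly in generating-function form and avoid passing through $a_n$ altogether. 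Also note that your route produces $\mathbf{P}(L_n\ge 0)\sim \Lambda(n)\mathbf{P}(S_n\ge 0)/n$, which after substituting Remark~\ref{rho0p>q} carries an extra factor $p$ relative to \eqref{Ber1}; tracking that constant through the $\Pi$-variation argument is part of the same matching issue.
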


Recall the definitions of the strict descending ladder epochs $\left\{
\gamma _{j},j\in \mathbb{N}_{0}\right\} $ of $\mathcal{S}$ and of their
associated renewal function $U(\cdot )$ in \eqref{defstrictdesc} and (\ref%
{Def_U}), respectively, and introduce the strict ascending ladder epochs $%
\left\{ \Gamma _{j},j\in \mathbb{N}_{0}\right\} $ of $\mathcal{S}$ and their
associated renewal function $V(\cdot )$ via
\begin{equation*}
\Gamma _{0}:=0,\quad \Gamma _{j+1}:=\min (n>\Gamma _{j}:S_{n}>S_{\Gamma
_{j}}),\quad j\in \mathbb{N}_{0},  \label{defstrictasc}
\end{equation*}%
and
\begin{equation*}
V(x):=1+\sum_{j=1}^{\infty }\mathbf{P}(S_{\Gamma _{j}}<x),\quad x>0,\quad
V\left( 0\right) =1,\quad V(x)=0,\quad x<0.
\end{equation*}

For a slowly varying function $L_{ii}(\cdot )$ let
\begin{equation*}
\hat{l}_{ii}(n):=\int_{1}^{n}\frac{L_{ii}(x)}{x}dx.
\end{equation*}%
The next lemma provides bounds on the probabilities for the running extrema
to be in a certain interval.

\begin{lemma}
\label{L_roughEstimates}Assume that Conditions (\ref{BasicCond}) and (\ref%
{Def_mu}) hold. Then there exists a constant $C\in \left( 0,\infty
\right) $ such that, for every $x\geq 0$ and $n\in \mathbb{N}$,
\begin{equation}
\mathbf{P}\left( L_{n}\geq -x\right) \leq \left\{
\begin{array}{ccc}
CU(x)n^{-1}\hat{l}_{22}\left( n\right) & \text{if} & p>q, \\
&  &  \\
CU(x)L_{33}\left( n\right) & \text{if} & p<q,%
\end{array}%
\right.  \label{EstimMin2}
\end{equation}%
and
\begin{equation*}
\mathbf{P}\left( M_{n}<x\right) \leq \left\{
\begin{array}{ccc}
CV(x)L_{44}\left( n\right) & \text{if} & p>q, \\
&  &  \\
CV(x)n^{-1}\hat{l}_{55}\left( n\right) & \text{if} & p<q.%
\end{array}%
\right.  \label{EstimMax2}
\end{equation*}
\end{lemma}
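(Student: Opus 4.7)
My strategy is to prove the bound on $\mathbf{P}(L_n\geq -x)$ by a ladder decomposition; the bound on $\mathbf{P}(M_n<x)$ then follows by applying the same argument to $-\mathcal{S}$, which exchanges $p\leftrightarrow q$ and the roles of the pairs $(U,V)$, $(L_{22},L_{55})$, $(\hat{l}_{22},\hat{l}_{55})$, and $(L_{33},L_{44})$.

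Decomposing the trajectory according to the last strict descending ladder epoch $\gamma_{N(n)}\leq n$ (where $N(n)$ is the number of such epochs up to time $n$) and applying the strong Markov property at that epoch yields the identity
\begin{equation*}
\mathbf{P}(L_n\geq -x) = \sum_{m=0}^{n} u_m(x)\,\alpha(n-m),
\end{equation*}
where $\alpha(j):=\mathbf{P}(L_j\geq 0)$ and $u_m(x):=\sum_{k\geq 0}\mathbf{P}(\gamma_k=m,\,-S_{\gamma_k}\leq x)$, and the mass identity $\sum_{m\geq 0}u_m(x)=U(x)$ follows from definition~(\ref{Def_U}).

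I would then use Theorem~\ref{T_Berg3_4}: when $p>q$, $\alpha(j)\sim L_{22}(j)/j$ is regularly varying of index $-1$ and Karamata's theorem gives $\sum_{j=0}^{n}\alpha(j)\sim\hat{l}_{22}(n)$; when $p<q$, $\alpha(j)\sim L_{33}(j)$ is slowly varying. Splitting the convolution at $m=n/2$, Potter's bounds give $\alpha(n-m)\leq C\alpha(n)$ on the head $m\leq n/2$, and the mass identity yields a contribution of at most $CU(x)\alpha(n)$, which is already of the correct order in both cases. On the tail $m>n/2$, I invoke the time-reversal identity $\mathbf{P}(m \text{ is a strict descending ladder epoch})=\mathbf{P}(M_m<0)$, together with the asymptotics (\ref{Ber3})--(\ref{Ber4}), to obtain a density-type control on $u_m(x)$; a Karamata convolution estimate then yields the slowly varying factor $\hat{l}_{22}(n)/n$ (resp.\ $L_{33}(n)$) multiplying $U(x)$.

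The main obstacle is the tail estimate: the crude bound $\sum_{m>n/2}u_m(x)\leq U(x)$ produces a non-decaying contribution, so one needs a quantitative uniform-in-$x$ bound such as $u_m(x)\leq C\,U(x)\,\mathbf{P}(m\text{ is a descending ladder epoch})/m$. This in turn requires a local-limit refinement of the joint asymptotics of $(\gamma_k, S_{\gamma_k})$, which is the analytic core of the lemma. Once that ingredient is in hand, the head-plus-tail bookkeeping is routine, and the bound on $\mathbf{P}(M_n<x)$ follows by the symmetry argument indicated in the first paragraph.
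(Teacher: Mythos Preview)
Your convolution identity $\mathbf{P}(L_n\geq -x)=\sum_{m=0}^{n}u_m(x)\,\alpha(n-m)$ is correct and is in fact the coefficient-wise version of the factorisation the paper derives via Spitzer and Sparre--Andersen identities, namely
\[
\sum_{n\geq 0}s^{n}\mathbf{P}(L_n\geq -x)=U_s(x)\sum_{n\geq 0}s^{n}\mathbf{P}(L_n\geq 0),\qquad U_s(x)=\sum_{m\geq 0}s^{m}u_m(x).
\]
So the starting point is the same. The divergence is in how the bound is extracted, and this is where your proposal has a genuine gap.

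You correctly observe that the head $m\leq n/2$ is harmless. The tail, however, you do not actually control: you postulate a bound of the type $u_m(x)\leq C\,U(x)\,\mathbf{P}(m\text{ is a ladder epoch})/m$ and then defer its proof to ``a local-limit refinement of the joint asymptotics of $(\gamma_k,S_{\gamma_k})$''. That is precisely the content of the lemma you are trying to prove, so the argument is not complete; and such a joint local limit theorem is a substantially harder statement than the lemma itself. In the present Cauchy setting it is not available off the shelf.

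The paper avoids this difficulty entirely by staying at the generating-function level. Since $u_m(x)\geq 0$, one has the trivial monotonicity $U_s(x)\leq U_1(x)=U(x)$, hence
\[
\sum_{n\geq 0}s^{n}\mathbf{P}(L_n\geq -x)\leq U(x)\sum_{n\geq 0}s^{n}\mathbf{P}(L_n\geq 0),
\]
and Karamata's Tauberian theorem (Corollary~1.7.3 in Bingham--Goldie--Teugels) gives the asymptotics of the right-hand side from Theorem~\ref{T_Berg3_4}. The pointwise bound is then recovered from the generating-function bound by the elementary monotonicity of $n\mapsto\mathbf{P}(L_n\geq -x)$: taking $s=1-1/n$ and keeping only the terms with index in $[n/2,n]$ yields
\[
\tfrac{n}{2}\bigl(1-\tfrac{1}{n}\bigr)^{n}\mathbf{P}(L_n\geq -x)\leq \sum_{m\geq 0}(1-\tfrac{1}{n})^{m}\mathbf{P}(L_m\geq -x)\leq C\,U(x)\,\hat{l}_{22}(n)
\]
for $p>q$, and similarly with $nL_{33}(n)$ for $p<q$. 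No tail splitting, no local limit input, and no joint ladder asymptotics are needed. Your symmetry argument for $M_n$ is fine; the same generating-function route works for it as well.
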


\begin{proof}
We know by a Spitzer identity that, for any $\lambda \geq 0$%
\begin{eqnarray*}
\sum_{n=0}^{\infty }s^{n}\mathbf{E}\left[ e^{\lambda L_{n}}\right]  &=&\exp
\left\{ \sum_{n=1}^{\infty }\frac{s^{n}}{n}\mathbf{E}\left[ e^{\lambda \min
(0,S_{n})}\right] \right\}  \\
&=&\exp \left\{ \sum_{n=1}^{\infty }\frac{s^{n}}{n}\mathbf{E}\left[
e^{\lambda S_{n}};S_{n}<0\right] \right\} \exp \left\{ \sum_{n=1}^{\infty }%
\frac{s^{n}}{n}\mathbf{P}\left( S_{n}\geq 0\right) \right\} .
\end{eqnarray*}%
A Sparre-Anderson identity (see, for instance, Theorem 4.3 in \cite{GV2017})
allows us to rewrite the first term at the right hand side as
\begin{eqnarray*}
\exp \left\{ \sum_{n=1}^{\infty }\frac{s^{n}}{n}\mathbf{E}\left[ e^{\lambda
S_{n}};S_{n}<0\right] \right\}  &=&1+\sum_{n=1}^{\infty }s^{n}\mathbf{E}%
\left[ e^{\lambda S_{n}};\Gamma ^{\prime }>n\right]  \\
&=&\int_{0}^{+\infty }e^{-\lambda x}U_{s}(dx),
\end{eqnarray*}%
where
\begin{equation*}
\Gamma ^{\prime }:=\min \left( n\in \mathbb{N},S_{n}\geq 0\right)
\end{equation*}%
and
\begin{equation*}
U_{s}(x)=\sum_{n=0}^{\infty }s^{n}\mathbf{P}\left( S_{n}\geq -x;\Gamma
^{\prime }>n\right) ,\ x\geq 0.
\end{equation*}%
Therefore,
\begin{eqnarray}
\sum_{n=0}^{\infty }s^{n}\mathbf{P}\left( L_{n}\geq -x\right)
&=&U_{s}(x)\exp \left\{ \sum_{n=1}^{\infty }\frac{s^{n}}{n}\mathbf{P}\left(
S_{n}\geq 0\right) \right\}   \notag \\
&=&U_{s}(x)\sum_{n=1}^{\infty }s^{n}\mathbf{P}\left( L_{n}\geq 0\right)
\label{Ln0}
\end{eqnarray}%
for $x\geq 0$. Note that by the duality principle for random walks (see, for
instance, \cite{GV2017} p. 63),
\begin{eqnarray}
\lim_{s\uparrow 1}U_{s}(x) &=&\sum_{n=0}^{\infty }\mathbf{P}\left( S_{n}\geq
-x;\Gamma ^{\prime }>n\right)   \notag  \label{duality} \\
&=&1+\sum_{n=1}^{\infty }\mathbf{P}\left( S_{n}\geq
-x;S_{i}<0,i=1,...,n\right)   \notag \\
&=&1+\sum_{n=1}^{\infty }\mathbf{P}\left( S_{n}\geq
-x;S_{n}<S_{j},j=0,1,...,n-1\right)   \notag \\
&=&1+\sum_{n=1}^{\infty }\sum_{r=1}^{n}\mathbf{P}\left( S_{n}\geq -x;\gamma
_{r}=n\right)  \\
&=&1+\sum_{r=1}^{\infty }\sum_{n=r}^{\infty }\mathbf{P}\left( S_{n}\geq
-x;\gamma _{r}=n\right) =1+\sum_{r=1}^{\infty }\mathbf{P}\left( S_{\gamma
_{r}}\geq -x\right) =U(x).  \notag
\end{eqnarray}

On the other hand, if $s\uparrow 1$ then (\ref{Ber1}) and an application of
Corollary 1.7.3 in \cite{bingham1989regular} with $\rho =0$ give for $p>q$,
\begin{equation*}
\sum_{n=0}^{\infty }s^{n}\mathbf{P}\left( L_{n}\geq 0\right) \sim
\sum_{n=0}^{\infty }s^{n}\frac{L_{22}(n)}{n}\sim \hat{l}_{22}\left( \frac{1}{%
1-s}\right) ,
\end{equation*}%
while (\ref{Ber2}) and again an application of Corollary 1.7.3 in \cite%
{bingham1989regular} but now with $\rho =1$ justify, for $p<q$ the
asymptotics
\begin{equation*}
\sum_{n=0}^{\infty }s^{n}\mathbf{P}\left( L_{n}\geq 0\right) \sim
\sum_{n=0}^{\infty }s^{n}L_{33}(n)\sim \frac{L_{33}(1/\left( 1-s\right) )}{%
1-s}.
\end{equation*}

Thus if $p>q$ then, as $s\uparrow 1$%
\begin{equation}
\sum_{n=0}^{\infty }s^{n}\mathbf{P}\left( L_{n}\geq -x\right) \sim U(x)\hat{l%
}_{22}\left( \frac{1}{1-s}\right) ,  \label{Slowly1}
\end{equation}%
and if $p<q$ then, as $s\uparrow 1$%
\begin{equation*}
\sum_{n=0}^{\infty }s^{n}\mathbf{P}\left( L_{n}\geq -x\right) \sim U(x)\frac{%
L_{33}(1/\left( 1-s\right) )}{1-s}.
\end{equation*}

Using \eqref{Ln0}, \eqref{Slowly1} and the monotonicity of $U_{s}(x)$ in $s$
we get, for $p>q$%
\begin{equation*}
\sum_{n=0}^{\infty }s^{n}\mathbf{P}\left( L_{n}\geq -x\right) \leq
U(x)\sum_{n=0}^{\infty }s^{n}\mathbf{P}\left( L_{n}\geq 0\right) \sim U(x)%
\hat{l}_{22}\left( \frac{1}{1-s}\right) .
\end{equation*}

Since $\mathbf{P}\left( L_{n}\geq -x\right) $ is nonincreasing with $n,$ we
have for $p>q$%
\begin{eqnarray*}
\frac{n}{2}\left( 1-\frac{1}{n}\right) ^{n}\mathbf{P}\left( L_{n}\geq
-x\right) &\leq &\sum_{n/2\leq m\leq n}\left( 1-\frac{1}{n}\right) ^{m}%
\mathbf{P}\left( L_{m}\geq -x\right) \\
&\leq &CU(x)\hat{l}_{22}\left( n\right) ,
\end{eqnarray*}%
and, similarly, for $p<q$%
\begin{equation*}
\frac{n}{2}\left( 1-\frac{1}{n}\right) ^{n}\mathbf{P}\left( L_{n}\geq
-x\right) \leq CU(x)nL_{33}\left( n\right) .
\end{equation*}%
As a result%
\begin{equation*}
\mathbf{P}\left( L_{n}\geq -x\right) \leq \left\{
\begin{array}{ccc}
CU(x)n^{-1}\hat{l}_{22}\left( n\right) & \text{if} & p>q, \\
&  &  \\
CU(x)L_{33}\left( n\right) & \text{if} & p<q.%
\end{array}%
\right.
\end{equation*}%
By the same arguments and (\ref{Ber3}) we have as $s\uparrow 1$
\begin{equation*}
\sum_{n=1}^{\infty }s^{n}\mathbf{P}\left( M_{n}<0\right) \sim
\sum_{n=1}^{\infty }s^{n}L_{44}(n)\sim \frac{L_{44}(1/\left( 1-s\right) )}{%
1-s}  \label{Regularity2}
\end{equation*}%
for $p>q$, and by (\ref{Ber4}) \
\begin{equation*}
\sum_{n=1}^{\infty }s^{n}\mathbf{P}\left( M_{n}<0\right) \sim
\sum_{n=1}^{\infty }s^{n}\frac{L_{55}(n)}{n}\sim \hat{l}_{55}\left( \frac{1}{%
1-s}\right)  \label{Slowly2}
\end{equation*}%
for $p<q$. Thus
\begin{equation*}
\mathbf{P}\left( M_{n}<x\right) \leq \left\{
\begin{array}{ccc}
CV(x)L_{44}\left( n\right) & \text{if} & p>q, \\
&  &  \\
CU(x)n^{-1}\hat{l}_{55}\left( n\right) & \text{if} & p<q.%
\end{array}%
\right.
\end{equation*}%
This ends the proof.
\end{proof}

\begin{remark}
Observe that%
\begin{multline*}
\sum_{n=1}^{\infty }s^{n}\mathbf{P}\left( L_{n}\geq 0\right)
\sum_{n=1}^{\infty }s^{n}\mathbf{P}\left( M_{n}<0\right) \\
=\exp \left\{ \sum_{n=1}^{\infty }\frac{s^{n}}{n}\mathbf{P}\left( S_{n}\geq
0\right) \right\} \times \exp \left\{ \sum_{n=1}^{\infty }\frac{s^{n}}{n}%
\mathbf{P}\left( S_{n}<0\right) \right\} =\frac{1}{1-s}.
\end{multline*}%
Thus, as $n\rightarrow \infty $
\begin{equation}
\hat{l}_{22}\left( n\right) L_{44}(n)\sim 1,\;L_{33}(n)\hat{l}_{55}\left(
n\right) \sim 1.  \label{Equival}
\end{equation}
\end{remark}

Set
\begin{equation*}
b_{n}:=\left( na_{n}\right) ^{-1},\quad n\in \mathbb{N}.
\end{equation*}%
The next statement describes some properties of the running extrema of $%
\mathcal{S}$.

\begin{lemma}
\label{P_AGKV}(compare with Proposition 2.3 in \cite{afanasyev2012limit})
Assume that Conditions (\ref{BasicCond}) and (\ref{Def_mu}) hold.
Then there exists a constant $c$ such that, uniformly for all $x,y\geq 0$
and all $n\in \mathbb{N}$
\begin{equation}
\mathbf{P}_{x}\left( L_{n}\geq 0,y-1\leq S_{n}<y\right) \ \leq \
c\,b_{n}\,U(x)V(y)\ ,  \label{LocalMin}
\end{equation}%
and
\begin{equation*}
\mathbf{P}_{-x}\left( M_{n}<0,-y\leq S_{n}<-y+1\right) \ \leq \
c\,b_{n}\,V(x)U(y)\ .  \label{LocalMax}
\end{equation*}
\end{lemma}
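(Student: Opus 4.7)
The two bounds \eqref{LocalMin} and \eqref{LocalMax} are related by the involution $\mathcal{S}\mapsto -\mathcal{S}$, which swaps $L_{n}\leftrightarrow -M_{n}$, the renewal functions $U\leftrightarrow V$, and the regimes $p>q$ and $p<q$. It therefore suffices to prove \eqref{LocalMin}, from which \eqref{LocalMax} will follow by application to $-\mathcal{S}$. The overall strategy is to follow the scheme used to prove Proposition~2.3 of \cite{afanasyev2012limit}, adapted to the present Cauchy ($\alpha=1$) regime.

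\textbf{The main steps.} Set $m:=\lfloor n/2\rfloor$ and apply the Markov property at time $m$ to decompose
\begin{equation*}
\mathbf{P}_{x}\bigl(L_{n}\geq 0,\,y-1\leq S_{n}<y\bigr)=\int_{[0,\infty)}\mathbf{P}_{x}\bigl(L_{m}\geq 0,\,S_{m}\in dw\bigr)\,\mathbf{P}_{w}\bigl(L_{n-m}\geq 0,\,y-1\leq S_{n-m}<y\bigr).
\end{equation*}
For the first factor, the renewal function $U(x)$ enters because the walk started at $x$ must avoid the barrier at $0$: combining the Sparre--Andersen / duality identity \eqref{Ln0} with a Stone-type local limit theorem for $\mathcal{S}$ in the Cauchy domain of attraction (as available in \cite{Berg17}, sharpened in \cite{kortchemski2019condensation}) should yield a refined local bound
\begin{equation*}
\mathbf{P}_{x}\bigl(L_{m}\geq 0,\,S_{m}\in [w,w+1)\bigr)\leq C\,b_{m}\,U(x)\,V(w),\qquad w\geq 0.
\end{equation*}
For the second factor, I would apply the duality principle to the walk started at $w$ (analogous to the computation at \eqref{duality}): this converts the event $\{L_{n-m}\geq 0,\,S_{n-m}\in [y-1,y)\}$ into an event about the running maximum of the dual walk, thereby interchanging the roles of $U$ and $V$. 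The same refined local estimate applied to the dual walk then produces a factor of order $C\,b_{n-m}\,V(y)\,U(w)$. Substituting both estimates into the Markov decomposition, using $b_{n-m}\asymp b_{m}\asymp b_{n}$, and summing in $w$ via the Sparre--Andersen product identity \eqref{Equival} (which controls the convolution of $L_{n}\geq 0$ and $M_{n}<0$ probabilities) should deliver the required inequality $c\,b_{n}\,U(x)\,V(y)$.

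\textbf{Main obstacle.} The delicate step is establishing the refined local bound
\begin{equation*}
\mathbf{P}_{x}\bigl(L_{m}\geq 0,\,S_{m}\in [w,w+1)\bigr)\leq C\,b_{m}\,U(x)\,V(w)
\end{equation*}
uniformly in $x,w\geq 0$. For $\rho\in(0,1)$ this is obtained in \cite{afanasyev2012limit} via a Vatutin--Wachtel local limit theorem for the random walk conditioned to stay positive, but in the present regime $\mathcal{S}$ lies in the Cauchy domain of attraction, with nonstandard centering $h_{m}\sim \pm(p-q)\,m\,l^{\ast}(a_{m})$ that diverges only slowly and is of a smaller order than $a_{m}$. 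Producing a Stone-type local limit theorem uniform across this centering, and then gluing it with \eqref{Ln0} and the asymptotics of Theorem~\ref{T_Berg3_4} to recover the renewal factors $U(x)$ and $V(w)$, is the principal technical work; this is also the stage at which Condition~\textbf{C} (through the slowly varying function $\Lambda$ of \eqref{def_Lambda}) enters in the case $p>q$. Once this refined local estimate is in place, the remainder of the proof is the routine Markov-plus-duality splitting described above.
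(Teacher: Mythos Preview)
Your proposal has a genuine gap: it is circular. The ``refined local bound''
\[
\mathbf{P}_{x}\bigl(L_{m}\geq 0,\ S_{m}\in [w,w+1)\bigr)\leq C\,b_{m}\,U(x)\,V(w)
\]
that you identify as the main obstacle and then intend to feed into the Markov decomposition is precisely the inequality \eqref{LocalMin} you are trying to prove (with $m$ in place of $n$ and $w$ in place of $y$). Once that bound is available there is nothing left to do; the two-way split and the integration in $w$ are redundant. So the proposal reduces to ``to prove the lemma, first prove the lemma at time $n/2$,'' with no induction base and no bootstrapping mechanism indicated. Moreover, even granting both local bounds, the integral $\int_{0}^{\infty} b_{m}U(x)V(w)\cdot b_{n-m}V(y)U(w)\,dw$ contains $\int U(w)V(w)\,dw$, which diverges; the identity \eqref{Equival} concerns the sequences $\hat l_{22},L_{33},L_{44},\hat l_{55}$ and says nothing about this integral.

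The paper's argument avoids any conditioned local limit theorem. It uses a \emph{three}-way split $[0,\lfloor n/3\rfloor]$, $[\lfloor n/3\rfloor,\lfloor 2n/3\rfloor]$, $[\lfloor 2n/3\rfloor,n]$ together with the dual walk $S_{i}'=S_{n}-S_{n-i}$. The event $\{M_{n}<x,\ x-y\leq S_{n}<x-y+1\}$ is shown to be contained in $A_{n}\cap A_{n}'\cap A_{n}''$, where $A_{n}=\{M_{\lfloor n/3\rfloor}<x\}$ depends on the first block, $A_{n}'=\{L_{\lfloor n/3\rfloor}'\geq -y\}$ depends on the last block, and $A_{n}''=\{x-y\leq S_{n}<x-y+1\}$ is handled, \emph{conditionally on the two outer blocks}, by the ordinary unconditional Stone/Gnedenko local limit theorem applied to the middle increment $S_{\lfloor 2n/3\rfloor}-S_{\lfloor n/3\rfloor}$, giving a factor $c\,a_{n}^{-1}$. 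The outer events are independent and are bounded directly by Lemma~\ref{L_roughEstimates}, contributing $V(x)$ and $U(y)$ together with slowly varying factors; the product of those factors collapses to $n^{-1}$ via \eqref{Equival}. No conditioned local limit theorem is needed, and Condition~\textbf{C} plays no role in this lemma (it is used only later, in Lemma~\ref{L_borel}).
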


\begin{proof}
We prove the latter statement only. Since the density of any $\alpha $%
-stable law is bounded, it follows from the Gnedenko \cite{GK54} and Stone
\cite{Sto65} local limit theorems that there exists a finite constant $C$
such that for all $n\in \mathbb{N}$ and all $z,\Delta \geq 0,$
\begin{equation}
\mathbf{P}\left( S_{n}\in \lbrack -z,-z+\Delta )\right) \leq \frac{C\Delta }{%
a_{n}}.  \label{EstS1}
\end{equation}%
Let $x,y\geq 0$, $\mathcal{S}^{\prime }$ be the dual random walk
\begin{equation*}
S_{i}^{\prime }=S_{n}-S_{n-i}
\end{equation*}%
and $L_{i}^{\prime }$, $i\leq n$, the corresponding minima. Denote
\begin{align*}
A_{n}\ :=\ \{& M_{\lfloor n/3\rfloor }<x\} \\
A_{n}^{\prime }\ :=\ \{& L_{\lfloor n/3\rfloor }^{\prime }\geq -y\}\ , \\
A_{n}^{\prime \prime }\ :=\ \{& x-y\leq S_{n}<x-y+1\} \\
=\ \{& x-y-T_{n}\leq S_{\lfloor 2n/3\rfloor }-S_{\lfloor n/3\rfloor
}<x-y-T_{n}+1\}\ ,
\end{align*}%
with
\begin{equation*}
T_{n}:=S_{\lfloor n/3\rfloor }+S_{n}-S_{\lfloor 2n/3\rfloor }.
\end{equation*}%
Let $\mathcal{A}_{n}$ be the $\sigma $--field generated by $X_{1},\ldots
,X_{\lfloor n/3\rfloor }$ and $X_{\lfloor 2n/3\rfloor +1},\ldots ,X_{n}$.
Then $T_{n}$ is $\mathcal{A}_{n}$--measurable, whereas $S_{\lfloor
2n/3\rfloor }-S_{\lfloor n/3\rfloor }$ is independent of $\mathcal{A}_{n}$.
Consequently from (\ref{EstS1}) and the fact that $\left\{ a_{n},n\in
\mathbb{N}\right\} $ is regularly varying there is a $c>0$ such that
\begin{equation*}
\mathbf{P}\left( A_{n}^{\prime \prime }\,|\,\mathcal{A}_{n}\right) \ \leq \
ca_{n}^{-1}\ .
\end{equation*}%
Since $A_{n},A_{n}^{\prime }$ are $\mathcal{A}_{n}$-measurable and
independent, it follows that
\begin{equation*}
\mathbf{P}\left( A_{n}\cap A_{n}^{\prime }\cap A_{n}^{\prime \prime }\right)
\ \leq \ ca_{n}^{-1}\mathbf{P}\left( A_{n}\right) \mathbf{P}\left(
A_{n}^{\prime }\right) \ .
\end{equation*}%
Moreover, according to Lemma \ref{L_roughEstimates}
\begin{equation*}
\mathbf{P}\left( L_{\lfloor n/3\rfloor }^{\prime }\geq -y\right) \ \leq \
c_{1}U(y)n^{-1}\hat{l}_{22}\left( n\right) \ ,\quad \mathbf{P}(M_{\lfloor
n/3\rfloor }<x)\ \leq \ c_{2}V(x)L_{44}(n),
\end{equation*}%
if $p>q$ and
\begin{equation*}
\mathbf{P}\left( L_{\lfloor n/3\rfloor }^{\prime }\geq -y\right) \ \leq \
c_{1}U(y)L_{33}\left( n\right) \ ,\quad \mathbf{P}(M_{\lfloor n/3\rfloor
}<x)\ \leq \ c_{2}V(x)n^{-1}\hat{l}_{55}(n),
\end{equation*}%
if $p<q$. This and (\ref{Equival}) give the uniform estimate
\begin{equation*}
\mathbf{P}\left( A_{n}\cap A_{n}^{\prime }\cap A_{n}^{\prime \prime }\right)
\ \leq \ cV(x)U(y)\,b_{n}
\end{equation*}%
for $c$ sufficiently large. Now notice that
\begin{equation*}
\{M_{n}<x,x-y\leq S_{n}<x-y+1\}\ \subset \ A_{n}\cap A_{n}^{\prime }\cap
A_{n}^{\prime \prime }\ .
\end{equation*}%
The fact that the event on the left hand side is included in $A_{n}\cap
A_{n}^{\prime \prime }$ is straightforward. It is also included in $%
A_{n}^{\prime }$ due to the following series of inequalities, which hold for
any $0\leq i\leq n$ on the event $\{x-y\leq S_n,M_{n}<x\}$:
\begin{equation*}
x-y\leq S_{n}-S_{i}+S_{i}\leq S_{n}-S_{i}+M_{n}\leq
S_{n}-S_{i}+x=S_{n-i}^{\prime }+x.
\end{equation*}%
This ends the proof.
\end{proof}

We have now all the tools needed to prove the following statement.

\begin{lemma}
\label{L_minim} Assume that Conditions (\ref{BasicCond}) and (\ref{Def_mu}) hold. Then for every $x\geq 0$ as $n\rightarrow \infty $

1) if $p>q$ then
\begin{eqnarray}
\mathbf{P}\left( L_{n}\geq -x\right) &\sim &U(x)\mathbf{P}\left( L_{n}\geq
0\right) \sim U(x)\frac{L_{22}(n)}{n},\quad  \label{ExectMin3} \\
\mathbf{P}\left( M_{n}<x\right) &\sim &V(x)\mathbf{P}\left( M_{n}<0\right)
\sim V(x)L_{44}(n);\quad  \notag
\end{eqnarray}

2) if $p<q$ then%
\begin{eqnarray}
\mathbf{P}\left( L_{n}\geq -x\right) &\sim &U(x)\mathbf{P}\left( L_{n}\geq
0\right) \sim U(x)L_{33}(n),\quad  \label{ExectMin4} \\
\mathbf{P}\left( M_{n}<x\right) &\sim &V(x)\mathbf{P}\left( M_{n}<0\right)
\sim V(x)\frac{L_{55}(n)}{n}.  \notag
\end{eqnarray}
\end{lemma}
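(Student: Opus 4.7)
The plan is to exploit the factorization obtained in the proof of Lemma~\ref{L_roughEstimates}: comparing coefficients of $s^n$ on both sides of $\sum_{n\geq 0}s^n\mathbf{P}(L_n\geq -x)=U_s(x)\sum_{n\geq 0}s^n\mathbf{P}(L_n\geq 0)$ yields the convolution identity
$$\mathbf{P}(L_n\geq -x)=\sum_{k=0}^n u_k(x)\,\mathbf{P}(L_{n-k}\geq 0),\qquad u_k(x):=\mathbf{P}(S_k\geq -x,\,\Gamma'>k),$$
together with the total-mass equality $\sum_{k\geq 0}u_k(x)=U(x)$ coming from the duality computation~\eqref{duality}. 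The proof of both asymptotics for $L_n$ thus reduces to showing that this convolution is asymptotic to $U(x)\mathbf{P}(L_n\geq 0)$.

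When $p<q$, Theorem~\ref{T_Berg3_4} gives $\mathbf{P}(L_n\geq 0)\sim L_{33}(n)$ slowly varying, so the factorization and Corollary~1.7.3 of~\cite{bingham1989regular} (in the form already used in the proof of Lemma~\ref{L_roughEstimates}) produce $\sum_{n\geq 0}s^n\mathbf{P}(L_n\geq -x)\sim U(x)L_{33}(1/(1-s))/(1-s)$ as $s\uparrow 1$. Since $n\mapsto\mathbf{P}(L_n\geq -x)$ is nonincreasing, the monotone Karamata Tauberian theorem (Corollary~1.7.3 in~\cite{bingham1989regular} with $\rho=1$) immediately yields $\mathbf{P}(L_n\geq -x)\sim U(x)L_{33}(n)$.

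When $p>q$, Karamata delivers only the Ces\`aro asymptotic $\sum_{k\leq n}\mathbf{P}(L_k\geq -x)\sim U(x)\hat l_{22}(n)$, which is too weak, and one must analyse the convolution directly. The plan is to split it at $\lfloor n/2\rfloor$. For the head $\sum_{k\leq n/2}u_k(x)\mathbf{P}(L_{n-k}\geq 0)$, the regular variation of $\mathbf{P}(L_n\geq 0)\sim L_{22}(n)/n$ gives $\mathbf{P}(L_{n-k}\geq 0)/\mathbf{P}(L_n\geq 0)\to 1$ for each fixed $k$ and a uniform bound by a constant for $k\leq n/2$; choosing $N$ with $\sum_{k>N}u_k(x)<\varepsilon$ and letting $n\to\infty$ before $N\to\infty$ makes the head $\sim U(x)\mathbf{P}(L_n\geq 0)$. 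The tail $\sum_{n/2<k\leq n}u_k(x)\mathbf{P}(L_{n-k}\geq 0)$ is the main obstacle, since the bound on $u_k(x)$ that one reads off from Lemma~\ref{L_roughEstimates} is off by a slowly varying factor. Writing $\{\Gamma'>k\}=\{M_k<0\}$ and summing the maximum estimate of Lemma~\ref{P_AGKV} (with starting point $0$) over endpoints $y\in\{1,\dots,\lceil x\rceil\}$ yields the sharp bound $u_k(x)\leq c(x)\,b_k$ with $b_k=(ka_k)^{-1}$. Combined with $b_k\leq C/(na_n)$ for $k>n/2$ and the partial-sum identity $\sum_{j\leq n/2}\mathbf{P}(L_j\geq 0)\sim\hat l_{22}(n)$, the tail is bounded by $c(x)\hat l_{22}(n)/(na_n)$, which is $o(L_{22}(n)/n)$ because $a_n=nL_4(n)$ is regularly varying of index~$1$ while $\hat l_{22}$ and $L_{22}$ are slowly varying.

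The bounds on $\mathbf{P}(M_n<x)$ in both cases follow by applying the same arguments to the dual walk $-\mathcal{S}$: this interchanges $p\leftrightarrow q$, $U\leftrightarrow V$, $L_n\leftrightarrow -M_n$ and $L_{22},L_{33}\leftrightarrow L_{55},L_{44}$, so each max statement reduces to the already-proven min statement of the opposite case. Throughout, the decisive step is the tail estimate in case $p>q$: only the local upper bound of Lemma~\ref{P_AGKV}, combined with the regular variation of $a_n$, is strong enough to beat the slowly varying factor $\hat l_{22}(n)$ by which the rough estimate of Lemma~\ref{L_roughEstimates} would overshoot.
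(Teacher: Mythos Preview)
Your proof is correct and follows essentially the same route as the paper. The paper also writes $\mathbf{P}(L_n\geq -x)$ as the convolution $\sum_{j=0}^n \mathbf{P}(L_{n-j}\geq 0)\,\mathbf{P}(S_j\geq -x;M_j<0)$ (obtained there by decomposing according to $\tau_n=\min\{j:S_j=L_n\}$ and duality, which is the coefficient-wise version of your generating-function identity), splits into head and tail, and kills the tail using the local bound of Lemma~\ref{P_AGKV} together with the regular variation of $a_n$. The only cosmetic differences are that the paper cuts at $\varepsilon n$ and lets $\varepsilon\downarrow 0$, whereas you cut at $n/2$ and use an $N$--then--$n$ argument inside the head; both are standard. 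Your direct Tauberian argument for the case $p<q$ (using monotonicity of $n\mapsto\mathbf{P}(L_n\geq -x)$ and $\rho=1$ in Corollary~1.7.3 of \cite{bingham1989regular}) is a small simplification over redoing the convolution analysis, which is what the paper's ``similar'' presumably intends.
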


\begin{proof}
As the derivations of the four equivalents are similar, we only check the
first one. Let
\begin{equation*}
\tau _{n}:=\min \left\{ j\leq n:S_{j}=L_{n}\right\} .
\end{equation*}%
We have
\begin{eqnarray*}
\mathbf{P}\left( L_{n}\geq -x\right)  &=&\sum_{j=0}^{n}\mathbf{P}\left(
L_{n-j}\geq 0\right) \mathbf{P}\left( S_{j}\geq -x;\tau _{j}=j\right)  \\
&=&\sum_{j=0}^{n}\mathbf{P}\left( L_{n-j}\geq 0\right) \mathbf{P}\left(
S_{j}\geq -x;M_{j}<0\right) ,
\end{eqnarray*}%
where we used the duality principle as in \eqref{duality}. In view of (\ref%
{Ber1}), for any $\varepsilon \in \left( 0,1\right) $ and $j\leq \varepsilon
n$,
\begin{equation*}
\mathbf{P}\left( L_{n-j}\geq 0\right) \sim \frac{n}{n-j}\mathbf{P}\left(
L_{n}\geq 0\right) ,\quad n\rightarrow \infty .
\end{equation*}%
Moreover, from \eqref{duality}, we have
\begin{equation}
\sum_{j=0}^{n\varepsilon }\mathbf{P}\left( S_{j}\geq -x;M_{j}<0\right)
=\sum_{j=0}^{n\varepsilon }\mathbf{P}\left( S_{j}\geq -x;\Gamma ^{\prime
}>j\right) \sim U(x),\quad n\rightarrow \infty .  \label{Main_term}
\end{equation}%
We deduce that
\begin{equation*}
\sum_{j=0}^{n\varepsilon }\mathbf{P}\left( L_{n-j}\geq 0\right) \mathbf{P}%
\left( S_{j}\geq -x;M_{j}<0\right) -\mathbf{P}\left( L_{n}\geq 0\right)
U(x)=O(\varepsilon )\mathbf{P}\left( L_{n}\geq 0\right) U(x)
\end{equation*}%
when $n$ is large enough. Further, by (\ref{LocalMin}), (\ref{Ber1}) and %
\eqref{AsympAn}, for any $\delta >0$
\begin{eqnarray}
\sum_{j=n\varepsilon }^{n}\mathbf{P}\left( L_{n-j}\geq 0\right) \mathbf{P}%
\left( S_{j}\geq -x;M_{j}<0\right)  &\leq &Cb_{n}xU(x)\sum_{j=n\varepsilon
}^{n}\mathbf{P}\left( L_{n-j}\geq 0\right)   \notag \\
&\leq &\frac{CxU(x)}{na_{n}}L_{22}(n)  \notag \\
&=&\frac{CxU(x)}{n^{2}}\frac{L_{22}(n)}{L_{4}(n)}=o\left( \frac{1}{%
n^{2-\delta }}\right)   \notag \\
&=&o\left( \mathbf{P}\left( L_{n}\geq 0\right) \right) ,\quad n\rightarrow
\infty ,  \label{Neglterm}
\end{eqnarray}%
since $(L_{22}(n)/L_{4}(n))n^{-\delta }\rightarrow 0$ as $n\rightarrow
\infty $ for any $\delta >0.$ Combining (\ref{Main_term}) and (\ref{Neglterm}%
) and letting $\varepsilon \rightarrow 0$ give (\ref{ExectMin3}).
\end{proof}

The last result of this section is a technical statement which will be
needed in the proof of Theorem \ref{T_main}. As Lemma \ref{L_minim}, it is a
consequence of Lemma \ref{P_AGKV} and can be proven in the same way as
Corollary 2.4 in \cite{afanasyev2012limit}.

\begin{lemma}
\label{C_exponential} Assume that Conditions (\ref{BasicCond}) and (\ref{Def_mu}%
) hold. For any $\theta >0$ there exists a finite $c$ (depending
on $\theta $) such that for all $x,y\geq 0$
\begin{equation*}
\mathbf{E}_{x}\big[e^{-\theta S_{n}};L_{n}\geq 0,S_{n}\geq y\big ]\leq c\
b_{n}V(x)U(y)\ e^{-\theta y}
\end{equation*}%
and
\begin{equation*}
\mathbf{E}_{-x}\big[e^{\theta S_{n}};M_{n}<0,S_{n}<-y\big ]\leq c\
b_{n}V(y)U(x)\ e^{\theta y}\ .
\end{equation*}
\end{lemma}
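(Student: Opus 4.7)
The two inequalities are related by the duality $S\leftrightarrow -S$, which sends $L_n\mapsto -M_n$, swaps the renewal functions $U$ and $V$, and interchanges the two starting points $\pm x$; it therefore suffices to establish the first inequality, and the second follows by applying the same argument to the reflected walk. The plan mirrors Corollary~2.4 of \cite{afanasyev2012limit}: discretize the range of $S_n$, apply the local-type bound of Lemma~\ref{P_AGKV}, and sum the result against the exponential weight $e^{-\theta S_n}$.

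Concretely, slicing $\{S_n\ge y\}$ into unit intervals gives
\begin{equation*}
\mathbf{E}_x\bigl[e^{-\theta S_n};L_n\ge 0,S_n\ge y\bigr]\ \le\ \sum_{k=\lfloor y\rfloor}^{\infty} e^{-\theta k}\,\mathbf{P}_x\bigl(L_n\ge 0,\ k\le S_n<k+1\bigr),
\end{equation*}
and Lemma~\ref{P_AGKV} controls each summand by a constant multiple of $b_n$ times the product of the two ladder renewal functions evaluated at the starting and ending positions. The factor attached to the starting point $x$ is independent of the summation index and can be taken outside, reducing the estimate to a tail of the form $\sum_{k\ge\lfloor y\rfloor}\mathcal{R}(k+1)\,e^{-\theta k}$, where $\mathcal{R}\in\{U,V\}$ is the renewal function attached to the endpoint.

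The technical heart is then
\begin{equation*}
\sum_{k\ge\lfloor y\rfloor}\mathcal{R}(k+1)\,e^{-\theta k}\ \le\ C_\theta\,\mathcal{R}(y)\,e^{-\theta y},
\end{equation*}
which rests on the sub-exponential growth of $\mathcal{R}$. In the present Cauchy regime both $U$ and $V$ are regularly varying at infinity of index at most one (a standard Tauberian consequence of the asymptotics collected in Theorem~\ref{T_Berg3_4}), so $\mathcal{R}(y+j)/\mathcal{R}(y)$ is eventually dominated by $e^{\theta j/2}$; the tail is then bounded by $\mathcal{R}(y)\sum_{j\ge 0}e^{-\theta j/2}=C_\theta\mathcal{R}(y)$. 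Multiplying by the starting-point factor produces precisely the announced bound $c\,b_n\,V(x)\,U(y)\,e^{-\theta y}$, with $c$ depending on $\theta$. The main obstacle is this tail summation, and the $\theta$-dependence of the constant is unavoidable, since removing the exponential weight leaves the divergent series $\sum_k\mathcal{R}(k+1)$.
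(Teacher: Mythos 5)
Your overall route is exactly the one the paper intends: the paper gives no argument beyond citing Corollary 2.4 of \cite{afanasyev2012limit}, and that argument is your slicing of $\{S_n\geq y\}$ into unit intervals, applying the local bound of Lemma \ref{P_AGKV} to each slice, and summing against the exponential weight. Two points, however, need tightening before the proof is complete.

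First, the renewal-function bookkeeping. Lemma \ref{P_AGKV} attaches $U$ to the starting point of a walk constrained by $\{L_n\geq 0\}$ and $V$ to the terminal slice, so your slicing produces the bound $c\,b_n\,U(x)V(y)e^{-\theta y}$; it does not produce ``precisely the announced bound'' $c\,b_n\,V(x)U(y)e^{-\theta y}$, and leaving $\mathcal{R}\in\{U,V\}$ unspecified hides this transposition rather than resolving it. (The statement of the lemma appears to carry over the notation of \cite{afanasyev2012limit}, where the roles of $U$ and $V$ are interchanged relative to \eqref{Def_U}; in the only application made of the lemma, $x=y=0$, the two versions coincide, so this is a notational issue, but you should say explicitly which function sits at which argument.) Likewise, deducing the second inequality by reflecting $S\mapsto -S$ requires a word of care: reflection turns $\{M_n<0\}$ into a subset of the reflected event $\{L_n\geq 0\}$ and exchanges strict and weak ladder structures, so the renewal functions of the reflected walk are only comparable to $U$ and $V$ up to constants; it is simpler to rerun the slicing with the second inequality of Lemma \ref{P_AGKV}, which is stated for exactly this event.

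Second, the tail estimate $\sum_{k\geq\lfloor y\rfloor}\mathcal{R}(k+1)e^{-\theta k}\leq C_\theta\,\mathcal{R}(y)\,e^{-\theta y}$ must hold uniformly in $y\geq 0$, since the constant in the lemma is uniform in $x,y$. Your justification via regular variation of $U$ and $V$ is neither established in the paper (Theorem \ref{T_Berg3_4} concerns $\mathbf{P}(L_n\geq 0)$ and $\mathbf{P}(M_n<0)$, not the ladder-height renewal functions) nor uniform as stated: ``eventually dominated by $e^{\theta j/2}$'' does not control small $y$ or the joint range of $(y,j)$. The elementary and sufficient fix is subadditivity of renewal functions, $\mathcal{R}(s+t)\leq \mathcal{R}(s)+\mathcal{R}(t)$, which gives at most linear growth and, since $\mathcal{R}\geq 1$ on $[0,\infty)$, the uniform bound $\mathcal{R}(k+1)\leq \mathcal{R}(y)\bigl(1+\mathcal{R}(k+1-y)\bigr)\leq C(2+k-y)\,\mathcal{R}(y)$ for $k\geq \lfloor y\rfloor$; summing $C(2+k-y)e^{-\theta k}$ then yields the required $C_\theta\,\mathcal{R}(y)e^{-\theta y}$ with $C_\theta$ depending only on $\theta$. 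With these repairs your argument is correct and coincides with the proof the paper has in mind.
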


\section{Change of measure}

\label{sec_chgtprob}

Recall the definition of the renewal function $U$ in (\ref{Def_U}). One of
its fundamental properties is the identity (see, for instance, \cite%
{kozlov1977asymptotic,bertoin1994conditioning})
\begin{equation*}
\mathbf{E}\left[ U(x+X);X+x\geq 0\right] =U(x),\,x\geq 0.  \label{DefV}
\end{equation*}%
This property has often been used to construct a change of probability
measure (see for instance \cite{geiger2001survival}), and we will use such a
construction in our proof.

Denote by ${\mathcal{F}}$ the filtration consisting of the $\sigma -$%
algebras ${\mathcal{F}}_{n}$ generated by the random variables $%
S_{0},...,S_{n}$ and $Z_{0},...,Z_{n}$. \ Taking into account $U(0)=1$ we
may introduce probability measures $\mathbf{P}_{n}^{+}$ on the $\sigma $%
-fields $\mathcal{F}_{n}$ by means of the densities
\begin{equation*}
d\mathbf{P}_{n}^{+}\ :=\ U(S_{n})I_{\{L_{n}\geq 0\}}\,d\mathbf{P}\ .
\end{equation*}%
Because of the martingale property the measures are consistent, i.e., $%
\mathbf{P}_{n+1}^{+}|\mathcal{F}_{n}=\mathbf{P}_{n}^{+}$. Therefore
(choosing a suitable underlying probability space), there exists a
probability measure $\mathbf{P}^{+}$ on the $\sigma $-field $\mathcal{F}%
_{\infty }:=\bigvee_{n}\mathcal{F}_{n}$ such that
\begin{equation}
\mathbf{P}^{+}|\mathcal{F}_{n}\ =\ \mathbf{P}_{n}^{+}\,,\quad n\geq 0\,.
\label{ppp}
\end{equation}%
We note that (\ref{ppp}) can be rewritten as
\begin{equation}
\mathbf{E}^{+}\,\left[ Y_{n}\right] \ =\ \mathbf{E}[Y_{n}U(S_{n});L_{n}\geq
0]  \label{measurechange}
\end{equation}%
for every $\mathcal{F}_{n}$--measurable nonnegative random variable $Y_{n}$.
This change of measure is the well-known Doob $h$-transform from the theory
of Markov processes. In particular, under $\mathbf{P}^{+}$ the process $S$
becomes a Markov chain with state space $\mathbb{R}_{0}^{+}$ and transition
kernel
\begin{equation*}
P^{+}(x;dy)\ :=\ \frac{1}{U(x)}\mathbf{P}\left( x+X\in dy\right) U(y)\
,\quad x\geq 0\ .
\end{equation*}%
In our context, we can show that $\mathbf{P}^{+}$ can be realised as the
limit of the probability of the process conditioned to live in a nonnegative
environment (in the sense that the running infimum is null). It is the
content of the next lemma, and will allow us to link the survival
probability of the population process to the probability for the running
infimum to be null, in order to prove Theorem \ref{T_main}.

\begin{lemma}
\label{conditioning} (compare with Lemma 2.5 in \cite%
{afanasyev2005criticality}) Assume that Conditions (\ref{BasicCond}) and (\ref%
{Def_mu}) hold. For $k\in \mathbb{N}$ let $Y_{k}$ be a bounded
real-valued $\mathcal{F}_{k}$--measurable random variable. Then, as $%
n\rightarrow \infty $,
\begin{equation*}
\mathbf{E}[Y_{k}\;|\;L_{n}\geq 0]\ \rightarrow \ \mathbf{E^{+}}\,Y_{k}\ .
\label{claim1_L_conditioning}
\end{equation*}%
More generally, let $Y_{1},Y_{2},\ldots $ be a uniformly bounded sequence of
real-valued random variables adapted to the filtration $\mathcal{F}$, which
converges $\mathbf{P}^{+}$--$a.s.$ to some random variable $Y_{\infty }$.
Then, as $n\rightarrow \infty $,
\begin{equation*}
\mathbf{E}[Y_{n}\;|\;L_{n}\geq 0]\ \rightarrow \ \mathbf{E}^{+}\;Y_{\infty
}\ .
\end{equation*}
\end{lemma}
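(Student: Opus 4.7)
The plan is to treat the two halves of the lemma in sequence: first establish the convergence for a fixed bounded $\mathcal{F}_k$-measurable random variable $Y_k$ using the Markov property and Scheff\'e's lemma, then extend to the general adapted case by a truncation argument.

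For the first part, conditioning on $\mathcal{F}_k$ and using that, given $S_k=x\geq 0$, the probability of $\min_{1\leq j\leq n-k}(S_{k+j}-S_k)\geq -x$ equals $\psi_{n-k}(x):=\mathbf{P}(L_{n-k}\geq -x)$, I obtain
\begin{equation*}
\mathbf{E}[Y_k; L_n\geq 0]=\mathbf{E}\bigl[Y_k\,\mathbf{1}_{L_k\geq 0}\,\psi_{n-k}(S_k)\bigr].
\end{equation*}
Lemma \ref{L_minim} together with the slow variation of $\mathbf{P}(L_\cdot\geq 0)$ from Theorem \ref{T_Berg3_4} gives the pointwise convergence $\psi_{n-k}(S_k)/\mathbf{P}(L_n\geq 0)\cdot\mathbf{1}_{L_k\geq 0}\to U(S_k)\mathbf{1}_{L_k\geq 0}$ as $n\to\infty$. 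To upgrade this to convergence in $L^1(\mathbf{P})$, I observe that the same Markov identity yields $\mathbf{E}[\mathbf{1}_{L_k\geq 0}\psi_{n-k}(S_k)]=\mathbf{P}(L_n\geq 0)$, so the normalised left-hand side has $\mathbf{P}$-integral identically equal to $1$; the limiting right-hand side has the same integral $1=\mathbf{E}^+[1]=\mathbf{E}[U(S_k)\mathbf{1}_{L_k\geq 0}]$ by the harmonic identity satisfied by $U$. Scheff\'e's lemma then delivers the $L^1$ convergence, and inserting the bounded factor $Y_k$ produces $\mathbf{E}[Y_k\mid L_n\geq 0]\to \mathbf{E}^+ Y_k$.

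For the second part, for any $k$ I decompose
\begin{equation*}
\mathbf{E}[Y_n\mid L_n\geq 0]-\mathbf{E}^+ Y_\infty=\mathbf{E}[Y_n-Y_k\mid L_n\geq 0]+\bigl(\mathbf{E}[Y_k\mid L_n\geq 0]-\mathbf{E}^+ Y_k\bigr)+\mathbf{E}^+[Y_k-Y_\infty].
\end{equation*}
The third summand is made arbitrarily small for $k$ large by bounded convergence under $\mathbf{P}^+$; for this fixed $k$, the middle summand tends to zero as $n\to\infty$ by the first part. The first summand is where the real work lies.

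I expect the main obstacle to be the uniform control, in $n$, of this first summand, because $|Y_n-Y_k|$ is $\mathcal{F}_n$-measurable with $n$ varying so the first part does not apply directly. My strategy is a truncation: for $\varepsilon>0$, introduce the $\mathcal{F}_N$-measurable events
\begin{equation*}
A_{k,N}:=\Bigl\{\max_{k\leq m\leq N}|Y_m-Y_k|>\varepsilon\Bigr\},\qquad N\geq k,
\end{equation*}
which increase as $N\to\infty$ to $A_k^\infty:=\{\sup_{m\geq k}|Y_m-Y_k|>\varepsilon\}$, and whose $\mathbf{P}^+$-probability tends to $0$ as $k\to\infty$ since $(Y_m)$ is $\mathbf{P}^+$-a.s.\ Cauchy. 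On $A_{k,n}^c$ one has $|Y_n-Y_k|\leq\varepsilon$, so
\begin{equation*}
\mathbf{E}[|Y_n-Y_k|\mid L_n\geq 0]\leq\varepsilon+2\|Y\|_\infty\,\mathbf{P}(A_{k,n}\mid L_n\geq 0).
\end{equation*}
The first part of the lemma applied to $\mathbf{1}_{A_{k,N}}$ gives $\mathbf{P}(A_{k,N}\mid L_n\geq 0)\to\mathbf{P}^+(A_{k,N})$; the remaining contribution $\mathbf{P}(A_{k,n}\setminus A_{k,N}\mid L_n\geq 0)$ is bounded by applying the Markov property at time $N$, so that the increment between $N$ and $n$ is a fresh random walk, and invoking the sharp asymptotics of Lemma \ref{L_minim} to control the ratio $\mathbf{P}(L_{n-N}\geq -S_N)/\mathbf{P}(L_n\geq 0)$ on the set $\{L_N\geq 0\}$; this leads to an upper bound of order $\mathbf{P}^+(\sup_{m\geq N}|Y_m-Y_N|>\varepsilon)$. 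Passing in turn $n\to\infty$, then $N\to\infty$, then $k\to\infty$ and finally $\varepsilon\to 0$ completes the argument.
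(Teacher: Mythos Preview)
Your first part (fixed $k$, via Scheff\'e) is correct and is essentially the argument behind the paper's reference to \cite{afanasyev2005criticality}.

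The second part, however, has a genuine gap at the crucial step. After applying the Markov property at time $N$, the event $A_{k,n}\setminus A_{k,N}$ still depends on $Y_{N+1},\ldots,Y_n$, which are $\mathcal{F}_m$-measurable for $m>N$ and hence entangled with the future increments of the walk. You therefore cannot factor the conditional probability as an indicator times $\psi_{n-N}(S_N)$, so Lemma~\ref{L_minim} does not apply as you suggest. If instead you drop the $A_{k,n}$-constraint and bound $\mathbf{1}_{A_{k,n}\setminus A_{k,N}}\leq \mathbf{1}_{A_{k,N}^c}$, the ratio argument yields only $\mathbf{P}^+(A_{k,N}^c)$, which tends to $\mathbf{P}^+((A_k^\infty)^c)$ and does not vanish. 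Either way, the claimed bound ``of order $\mathbf{P}^+(\sup_{m\geq N}|Y_m-Y_N|>\varepsilon)$'' is unjustified.

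The paper avoids this truncation entirely. For $p<q$ it conditions on $\mathcal{F}_n$ (not on a fixed $\mathcal{F}_N$) after extending the conditioning event to $\{L_{\lfloor\lambda n\rfloor}\geq 0\}$ for $\lambda>1$, and uses the \emph{uniform} bound $\mathbf{P}(L_l\geq -x)\leq C\,U(x)\,L_{33}(l)$ from Lemma~\ref{L_roughEstimates}, together with $\mathbf{P}(L_l\geq 0)\sim L_{33}(l)$ and slow variation, to get
\[
\bigl|\mathbf{E}[Y_n-Y_k\mid L_{\lfloor\lambda n\rfloor}\geq 0]\bigr|
\leq C\,\mathbf{E}\bigl[|Y_n-Y_k|\,U(S_n)\mathbf{1}_{L_n\geq 0}\bigr]
= C\,\mathbf{E}^+[|Y_n-Y_k|].
\]
The right-hand side tends to zero by dominated convergence under $\mathbf{P}^+$, and slow variation of $n\mapsto\mathbf{P}(L_n\geq 0)$ transfers the conclusion from $\lfloor\lambda n\rfloor$ back to $n$. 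The key ingredient you are missing is precisely this uniform domination of $\psi_l(x)/\mathbf{P}(L_l\geq 0)$ by $C\,U(x)$; your appeal to the ``sharp asymptotics'' of Lemma~\ref{L_minim} gives only pointwise (in $x$) control, which is not enough when the relevant time index varies with $n$.
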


\begin{proof}
The proof of this lemma in the case $p>q$ coincides with the proof of Lemma
2.5 in \cite{afanasyev2005criticality} when taking $\rho =0$ and we omit it.
In the case $p<q$ some modifications are needed to check the second claim of
the lemma. Namely, writing
\begin{equation*}
m_{l}(x):=\mathbf{P}(L_{l}\geq -x)\quad \text{for}\quad x\geq 0,\ l\in
\mathbb{N}
\end{equation*}%
and using \eqref{EstimMin2}, \eqref{ExectMin4} and \eqref{measurechange} we
deduce for $\lambda >1$, $k\leq n$ and $n$ large enough, the existence of a
finite $C$ such that
\begin{align*}
\left\vert \mathbf{E}[Y_{n}-Y_{k}|I\left\{ L_{\lfloor \lambda n\rfloor }\geq
0\right\} ]\right\vert & \leq \mathbf{E}\left[ \left\vert
Y_{n}-Y_{k}\right\vert \frac{m_{\lfloor (\lambda -1)n\rfloor }(S_{n})}{%
m_{\lfloor \lambda n\rfloor }(0)}I\left\{ L_{n}\geq 0\right\} \right]  \\
& \leq C\mathbf{E}\left[ \left\vert Y_{n}-Y_{k}\right\vert U(S_{n})I\left\{
L_{n}\geq 0\right\} \right]  \\
& =C\mathbf{E}^{+}\left[ \left\vert Y_{n}-Y_{k}\right\vert \right] .
\end{align*}%
Letting\ sequentially $n$ and $k$ go to infinity and applying the dominated
convergence theorem, we obtain that the right hand side of the previous
series of inequalities vanishes. Applying now the first claim of the lemma
and using the fact that $n\mapsto \mathbf{P}(L_{n}\geq 0)$ is slowly varying
we obtain
\begin{equation*}
\mathbf{E}[Y_{n};L_{\lfloor \lambda n\rfloor }\geq 0]=\left( \mathbf{E}%
^{+}[Y_{\infty }]+o(1)\right) \mathbf{P}(L_{\lfloor \lambda n\rfloor }\geq
0)=\left( \mathbf{E}^{+}[Y_{\infty }]+o(1)\right) \mathbf{P}(L_{n}\geq 0)
\end{equation*}%
and
\begin{equation*}
\mathbf{E}[Y_{n};L_{n}\geq 0]-\mathbf{E}[Y_{n};L_{\lfloor \lambda n\rfloor
}\geq 0]=o\left( \mathbf{P}(L_{n}\geq 0)\right) .
\end{equation*}%
This ends the proof.
\end{proof}

Let $\nu \geq 1$ be the time of the first \emph{prospective minimal value}
of $\mathcal{S}$, i.e., a minimal value with respect to the future development of the
walk,
\begin{equation*}
\nu \ :=\ \min \{m\in \mathbb{N}\;:\;S_{m+i}\geq S_{m}\text{ for all }i\geq
0\}.
\end{equation*}%
Moreover, let $\iota \in \mathbb{N}$ be the first weak ascending ladder
epoch of $S$,
\begin{equation*}
\iota \ :=\ \min \{m\in \mathbb{N}\;:\;S_{m}\geq 0\}\ .
\end{equation*}%
We denote
\begin{equation*}
\widetilde{f}_{n}\ :=\ f_{\nu +n}\ \mbox{ and }\ \widetilde{S}_{n}\ :=\
S_{\nu +n}-S_{\nu },\quad n\in \mathbb{N}.\
\end{equation*}

The previous result allows us to rigorously express what we mean by \textit{%
living in a good environment} for the population process. The next lemma and
its proof are the same as Lemma 2.6 in \cite{afanasyev2005criticality} and
its proof. We thus do not provide it and refer the reader to \cite%
{afanasyev2005criticality}.

\begin{lemma}
\label{L_tanaka} (see Lemma 2.6 in \cite{afanasyev2005criticality}) Suppose
that $\iota <\infty $ $\mathbf{P}$--$a.s.$ Then $\nu <\infty $ $\,\mathbf{P}%
^{+}$--$a.s.$ and

\begin{enumerate}
\item $(f_{1},f_{2},\ldots )$ and $(\widetilde{f}_{1},\widetilde{f}%
_{2},\ldots )$ are identically distributed with respect to~$\mathbf{P}^{+}$;

\item $(\nu ,f_{1},\ldots ,f_{\nu })$ and $(\widetilde{f}_{1},\widetilde{f}%
_{2},\ldots )$ are independent with respect to $\mathbf{P}^{+}$;

\item $\mathbf{P}^+ \{\nu = k, S_{\nu} \in dx \} = \mathbf{P} \{\iota = k,
S_{\iota} \in dx \}$ for all $k \geq 1$.
\end{enumerate}
\end{lemma}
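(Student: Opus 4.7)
The plan is to decompose the path under $\mathbf{P}^+$ at its first post-minimum $\nu$, working directly from the density $d\mathbf{P}^+|_{\mathcal{F}_n} = U(S_n)\,I\{L_n \geq 0\}\,d\mathbf{P}|_{\mathcal{F}_n}$. First I would verify $\nu < \infty$ $\mathbf{P}^+$-a.s.: under $\mathbf{P}^+$ the walk $\mathcal{S}$ is a Markov chain on $[0,\infty)$ with kernel $P^+$, and a standard property of this $h$-transform (combined with slow variation of $U$) is that $S_n \to \infty$ $\mathbf{P}^+$-a.s., so $\inf_{n \geq 1} S_n$ is attained at a finite time, which is a post-minimum. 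A brief combinatorial step then identifies
\[
\{\nu = k,\ S_\nu = x\} = \{S_1, \ldots, S_{k-1} > x,\ S_k = x,\ S_{k+j} \geq x\ \text{for all } j \geq 1\},
\]
the nontrivial inclusion being that for each $j < k$ failing the post-minimum property one can trace a chain of strict descents $S_j > S_{j'} > S_{j''} > \cdots$ that must eventually land at some index $\geq k$ (where the walk is $\geq x$), forcing $S_j > x$.

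With this characterisation I would take bounded measurable $F$ of $(f_1,\ldots,f_k)$ and $G$ of $(f_{k+1},\ldots,f_{k+l})$, and approximate $\{\nu = k, S_\nu \in dx\}$ from above by the $\mathcal{F}_{k+n}$-measurable decreasing events $B_n := \{S_1,\ldots,S_{k-1}>x,\ S_k \in dx,\ S_{k+j} \geq x,\ j=1,\ldots,n\}$, on which $L_{k+n} \geq 0$ is automatic. Plugging into the density formula, integrating over $\{S_k \in dx\}$ and using independence of the pre- and post-$k$ increments together with stationarity, one obtains
\[
\mathbf{E}^+[FG\,I(B_n)] = \mathbf{E}[F\,I\{S_1,\ldots,S_{k-1}>x,\ S_k \in dx\}] \cdot \mathbf{E}[G(f_1,\ldots,f_l)\,I\{L_n\geq 0\}\,U(x+S_n)].
\]
Rewriting the second factor as $\mathbf{E}^+[G\cdot U(x+S_n)/U(S_n)]$ and exploiting that $S_n\to\infty$ $\mathbf{P}^+$-a.s.\ while $U$ is slowly varying, dominated convergence delivers the limit $\mathbf{E}^+[G(f_1,\ldots,f_l)]$. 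A time reversal $S^*_j := S_k - S_{k-j}$ (which preserves the joint law of the first $k$ steps) then converts $\{S_1,\ldots,S_{k-1}>x,\ S_k = x\}$ into $\{\iota = k,\ S_\iota = x\}$; with $F \equiv G \equiv 1$ this is item (3). Summing over $k$ and integrating in $x$ produces the factorisation $\mathbf{E}^+[F(f_1,\ldots,f_\nu)\,G(\widetilde{f}_1,\ldots,\widetilde{f}_l)] = \mathbf{E}^+[F(f_1,\ldots,f_\nu)] \cdot \mathbf{E}^+[G(f_1,\ldots,f_l)]$ (item (2)); setting $F \equiv 1$ and using $\mathbf{P}^+(\nu<\infty)=1$ yields item (1).

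The main obstacle is the limit $\lim_n \mathbf{E}[G\,I\{L_n\geq 0\}\,U(x+S_n)] = \mathbf{E}^+[G]$: one needs the martingale identity $\mathbf{E}[U(S_n)\,I\{L_n\geq 0\}] = U(0) = 1$ (the very reason $\mathbf{P}^+$ is a probability measure) to rewrite the integral as a $\mathbf{P}^+$-expectation of the ratio $U(x+S_n)/U(S_n)$, control that ratio uniformly in $S_n \geq 0$ by a constant depending only on $x$ via Potter-type bounds for the slowly varying $U$, and finally pass to the limit by dominated convergence using the $\mathbf{P}^+$-a.s.\ divergence $S_n \to \infty$ together with $U(x+y)/U(y) \to 1$ as $y \to \infty$.
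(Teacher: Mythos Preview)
The paper does not give its own proof of this lemma; it simply cites Lemma~2.6 of \cite{afanasyev2005criticality} and refers the reader there. Your proposal reproduces precisely the Tanaka-type path decomposition used in that reference: characterise $\{\nu=k,S_\nu=x\}$ as a strict pre-minimum followed by a post-minimum excursion, peel off the post-$k$ part via the density $U(S_{k+n})I\{L_{k+n}\ge0\}$ and independence of increments, pass to the limit $n\to\infty$, and identify the pre-$k$ factor with $\{\iota=k,S_\iota\in dx\}$ by time reversal. So the approach is the same as the paper's (referenced) proof.

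One small point worth tightening: you twice invoke that $U$ is \emph{slowly varying}, but this is neither assumed nor proved in the paper (and in fact need not hold in both regimes considered). What your limit step actually requires is that, for each fixed $x\ge0$, the ratio $U(x+y)/U(y)$ is bounded uniformly in $y\ge0$ and tends to $1$ as $y\to\infty$. Both follow directly from the subadditivity $U(x+y)\le U(x)+U(y)$ of renewal functions together with $U(y)\to\infty$ (the walk oscillates, so there are infinitely many descending ladder epochs): indeed $1\le U(x+y)/U(y)\le 1+U(x)/U(y)\le 1+U(x)$. Replacing the ``slowly varying $U$'' and ``Potter-type bounds'' justifications by this elementary inequality removes the only loose end in your argument.
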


\section{Proof of Theorem \protect\ref{T_main}}

\label{sec_proof}

Thanks to the results we have collected in the previous sections, we are now
able to prove our main result. We have already demonstrated (Lemmas \ref%
{conditioning} and \ref{L_tanaka}) that we can divide the survival
probability of $\mathcal{Z}$ until time $n$ into two parts: the probability
for the process to survive until the time when the running infimum $L_{n}$
is attained for the first time, and the probability that the process $%
\mathcal{Z}$ survives in a "good" environment, i.e., in an environment with
a running infimum of $L$ null. We still have to prove that the population
indeed has a nonnegligible probability to survive in this good environment,
for large $n$. It is the content of the next result.

Let
\begin{equation*}
\eta _{k}\ :=\ \sum_{y=0}^{\infty }y(y-1)\;f_{k}\left[ y\right] \ \Big/\ %
\Big(\sum_{y=0}^{\infty }y\;f_{k}\left[ y\right] \Big)^{2}\,,\quad k\in
\mathbb{N}.
\end{equation*}

\begin{lemma}
\label{L_borel} Assume that Conditions (\ref{BasicCond}) and (\ref{Def_mu}) hold. If $p>q$, and Conditions A and C hold or if $p<q$ and Condition
B holds, then
\begin{equation*}
\sum_{k=0}^{\infty }\eta _{k+1}e^{-S_{k}}\ <\ \infty \quad \mathbf{P}^{+}%
\text{--}a.s.
\end{equation*}
\end{lemma}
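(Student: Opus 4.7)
The plan is to combine a truncation argument with a Borel-Cantelli step under $\mathbf{P}^{+}$, in the spirit of Lemma 2.7 of \cite{afanasyev2005criticality}. I pick a sequence $\varepsilon_{k}\to\infty$ (to be tuned to the regime) and split
$$\sum_{k=0}^{\infty}\eta_{k+1}e^{-S_{k}}=\sum_{k:\log^{+}\eta_{k+1}\leq\varepsilon_{k}}\eta_{k+1}e^{-S_{k}}+\sum_{k:\log^{+}\eta_{k+1}>\varepsilon_{k}}\eta_{k+1}e^{-S_{k}}=:\Sigma_{1}+\Sigma_{2}.$$
The ``good'' part is bounded by $\Sigma_{1}\leq\sum_{k}e^{\varepsilon_{k}-S_{k}}$ and will be controlled once the drift of $S_{k}$ to $+\infty$ under $\mathbf{P}^{+}$ is quantified. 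For $\Sigma_{2}$ it is enough to show $\sum_{k}\mathbf{P}^{+}(\log^{+}\eta_{k+1}>\varepsilon_{k})<\infty$, since Borel-Cantelli then forces only finitely many summands to survive.

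To bound the $\mathbf{P}^{+}$-probability I transfer back to $\mathbf{P}$ via \eqref{measurechange}, apply the subadditivity bound $U(S_{k+1})\leq U(S_{k})+U(X_{k+1}^{+})$, condition on $\mathcal{F}_{k}$, and exploit the independence of $f_{k+1}$ (and hence of $(\eta_{k+1},X_{k+1})$) from $\mathcal{F}_{k}$. Combined with the identity $\mathbf{E}[U(S_{k});L_{k}\geq 0]=1$ this reduces the problem to estimating
$$\mathbf{P}(\log^{+}\eta_{1}>\varepsilon_{k})\quad\text{and}\quad\mathbf{E}\bigl[U(X_{1}^{+})\mathbf{1}_{\{\log^{+}\eta_{1}>\varepsilon_{k}\}}\bigr]\,\mathbf{P}(L_{k}\geq 0).$$
Since $\eta_{1}\leq\zeta_{1}(a)+a^{2}e^{-2X_{1}}$, both quantities are controlled by moments of $\zeta_{1}(a)$ and $U(X_{1})\zeta_{1}(a)$; the moment bounds of \textbf{Condition A} (resp.\ \textbf{Condition B}), applied via Markov, then provide fast enough tail decay to make the series summable for an appropriate choice of $\varepsilon_{k}$ (polynomial in $\log k$ under A, a small power of $k$ under B).

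The treatment of $\Sigma_{1}$ is where the two regimes split. When $p<q$ the walk $\mathcal{S}$ is already pushed to $+\infty$ (since $\mathbf{P}(L_{n}\geq 0)$ is slowly varying), so $S_{k}$ drifts quickly under $\mathbf{P}^{+}$ and Lemma \ref{C_exponential} directly yields $\sum_{k}e^{\varepsilon_{k}-S_{k}}<\infty$ $\mathbf{P}^{+}$-a.s. When $p>q$ the conditioning is far more singular ($\mathbf{P}(L_{n}\geq 0)\sim L_{22}(n)/n$) and the conditioned walk drifts slowly. This is where \textbf{Condition C} enters: the slowly varying function $\Lambda$ of \eqref{def_Lambda} encodes the growth of $S_{k}$ under $\mathbf{P}^{+}$, and the assumption $\sum_{j}1/\Lambda(g(j))<\infty$, together with the exponential-moment estimate of Lemma \ref{C_exponential}, dominates $\sum e^{\varepsilon_{k}-S_{k}}$ by a convergent series sampled along $k=g(j)$, which is then lifted to the full sum by the monotonicity of the truncated exponential increments.

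The principal obstacle I expect is this last step in the $p>q$ regime: in the Cauchy domain there is no clean stable scaling available for the conditioned walk, so even the divergence $S_{k}\to+\infty$ under $\mathbf{P}^{+}$ has to be read off from the renewal-function machinery and Theorem \ref{T_Berg3_4}. Condition C is tailor-made so that $\Lambda$ compensates the slow conditioned drift along a sparse enough subsequence, which is why it is assumed only in the $p>q$ case.
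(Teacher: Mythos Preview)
Your outline differs substantially from the paper's argument, and the treatment of $\Sigma_{1}$ contains a genuine gap in both regimes.

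The paper does \emph{not} try to control $\sum_{k}e^{\varepsilon_{k}-S_{k}}$ directly via Lemma~\ref{C_exponential}. Instead it uses the decomposition $\eta_{k+1}\leq a e^{-X_{k+1}}+\zeta_{k+1}(a)$ to reduce to $a\sum_{k}e^{-S_{k}}+\sum_{k}\zeta_{k+1}(a)e^{-S_{k}}$, and then controls both sums by blocking along the sequence of \emph{prospective minima} $\nu(0)<\nu(1)<\cdots$ furnished by Lemma~\ref{L_tanaka}. The crucial facts are (i) the ladder increments $S_{\nu(j)}-S_{\nu(j-1)}$ are i.i.d.\ nonnegative with positive mean under $\mathbf{P}^{+}$, so $S_{\nu(j)}\geq\lambda j$ eventually by the strong law; (ii) the block lengths $\nu_{j}=\nu(j)-\nu(j-1)$ have tail $\mathbf{P}^{+}(\nu>k)=\mathbf{P}(M_{k}<0)$, and \textbf{Condition~C} is precisely the statement $\sum_{j}\mathbf{P}^{+}(\nu_{j}>g(j))<\infty$, giving $\nu_{j}\leq g(j)=e^{o(j)}$ eventually. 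The Borel--Cantelli step for $\zeta_{k}(a)$ is then run in the variable $j$, not $k$, which is what makes the arithmetic close.

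Your direct route breaks down because the two constraints on $\varepsilon_{k}$ are incompatible. For $p<q$ under \textbf{Condition~B}, Markov on $(\log^{+}\zeta_{1}(a))^{1+\beta}$ only yields $\mathbf{P}(\log^{+}\zeta_{1}(a)>\varepsilon_{k})\leq C\varepsilon_{k}^{-(1+\beta)}$, so summability of $\Sigma_{2}$ forces $\varepsilon_{k}\gg k^{1/(1+\beta)}$. But Lemma~\ref{C_exponential} gives at best $\mathbf{E}^{+}[e^{-\theta S_{k}}]\leq Cb_{k}\asymp k^{-2}$ (after absorbing the $U(S_{k})$ weight into the exponential), and hence $\sum_{k}e^{\varepsilon_{k}}\mathbf{E}^{+}[e^{-S_{k}}]<\infty$ only when $\varepsilon_{k}=o(\log k)$. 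These two requirements cannot be met simultaneously, so ``Lemma~\ref{C_exponential} directly yields $\sum_{k}e^{\varepsilon_{k}-S_{k}}<\infty$'' is not correct as stated. The prospective-minima blocking is exactly what converts the growth $S_{\nu(j)}\geq\lambda j$ (linear in the block index) into a usable lower bound on $S_{k}$ of order a positive power of $k$.

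For $p>q$ your reading of \textbf{Condition~C} is also off. The function $g(j)$ is not a subsampling grid for $\sum_{k}e^{-S_{k}}$; it is a bound on the excursion lengths $\nu_{j}$ between consecutive prospective minima. There is no monotonicity of $k\mapsto e^{-S_{k}}$ to ``lift'' a bound along $k=g(j)$ to the full sum. What Condition~C buys in the paper is $\nu(j)\leq e^{\gamma j}$ eventually for every $\gamma>0$, which combined with $S_{\nu(j)}\geq\lambda j$ and a Borel--Cantelli bound $\zeta_{k}(a)\leq k^{\alpha/\gamma}$ (from \textbf{Condition~A}) closes the argument with $\alpha<\lambda$. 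Without the ladder structure of Lemma~\ref{L_tanaka}, neither the role of $g$ nor the linear drift of $S$ under $\mathbf{P}^{+}$ is available.
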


\begin{proof}
Let us first assume that $p>q$, and Conditions A and C hold. Recall the
definition of the standardized truncated second moment of the environment in %
\eqref{defzetaa}. Following \cite{afanasyev2005criticality} Equation (2.24)
we have the following bound, for any $a \in \mathbb{N}$,
\begin{align*}
\sum_{k=0}^{\infty }\eta _{k+1}e^{-S_{k}} &\leq a \sum_{k=0}^{\infty
}e^{-S_{k}} +\sum_{k=0}^{\infty } \zeta_{k+1}(a)e^{-S_{k}}  \notag \\
& =: \mathcal{A}_a+\mathcal{B}_a.  \label{defAandA}
\end{align*}

The first step of the proof consists in bounding the two sums by using the
times $0:=\nu (0)<\nu (1)<\cdots $ of prospective minima of $\mathcal{S}$,
defined by
\begin{equation*}
\nu (j)\ :=\ \min \{m>\nu ({j-1})\;:\;S_{m+i}\geq S_{m}\text{ for all }i\geq
0\}\,,\quad j\in \mathbb{N}.  \label{renewal}
\end{equation*}

By definition,
\begin{equation*}
S_{k}\geq S_{\nu (j)},\ \mbox{ if }\,k\geq \nu (j).  \label{simple}
\end{equation*}%
Thus, we get
\begin{equation*}
\mathcal{A}_a \leq a \sum_{j=0}^{\infty } (\nu(j+1)-\nu(j)) e^{-S_{v(j)}},
\end{equation*}%
and
\begin{equation*}
\mathcal{B}_a \leq \sum_{j=0}^{\infty } \left(\sum_{k=\nu(j)+1}^{\nu(j+1)}
\zeta_k(a) \right) e^{-S_{v(j)}}.
\end{equation*}

Now we aim at bounding the variables $\nu (j)$. For the sake of readability,
let us introduce
\begin{equation*}
\nu _{j}=\nu (j)-\nu (j-1),\quad j\in \mathbb{N}.
\end{equation*}%
By Lemma \ref{L_tanaka}.$($\emph{1)} and \emph{(2)}, $\nu (j)$ is the sum of
$j$ nonnegative i.i.d. random variables, each having the distribution of $%
\nu =\nu (1)=\nu _{1}$. Lemma \ref{L_tanaka}.$($\emph{3)} and (\ref{Ber3})
imply for large $k$
\begin{eqnarray*}
\mathbf{P}^{+}\left( \nu >k\right) \  &=&\ \mathbf{P}\{\iota >k\}\ =\
\mathbf{P}\{M_{k}<0\}\  \\
&\leq &2L_{44}(k)= 2/ \Lambda(k).
\end{eqnarray*}
These estimates and Condition\textbf{\ C} imply
\begin{equation*}
\sum_{j=1}^{\infty }\mathbf{P}^{+}\left( \nu _{j}>g(j)\right) \leq
2\sum_{j=1}^{\infty } 1/\Lambda(g(j))<\infty .
\end{equation*}
Hence, by the Borel-Cantelli lemma there will be $\mathbf{P}^{+}$--$a.s.$
only a finite number of cases when $\nu _{j}>g(j)$. And as $g(i)=e^{o(i)},\
i\rightarrow \infty $, for any $\gamma >0$,
\begin{equation*}
\sum_{i=0}^{j}g(i)=o\left( e^{\gamma j}\right) ,\quad j\rightarrow \infty .
\end{equation*}%
Thus, there will be $\mathbf{P}^{+}$--$a.s.$ only a finite number of cases
when $\nu (j)>e^{\gamma j}$.

Now we would like to bound the term
\begin{equation*}
\sum_{k=\nu (j)+1}^{\nu (j+1)}\zeta _{k}(a)
\end{equation*}%
in order to show that the random variable $\mathcal{B}_{a}$ is almost surely
finite. The first step to obtain this bound is to use the inequality (2.25)
in \cite{afanasyev2005criticality}, that we now recall: for any $x\geq 0$,
\begin{equation*}
\mathbf{P}^{+}(\zeta _{k}(a)>x)\leq \mathbf{P}(\zeta _{1}(a)>x)+\mathbf{E}%
[U(X_{1});\zeta _{1}(a)>x]\mathbf{P}(L_{k-1}\geq 0).
\end{equation*}%
Applying it with $x=k^{\alpha /\gamma }$ (with $\alpha >0$ to be precised
later on) and using the Markov inequality as well as Condition\textbf{\ A}
yield for any $k\in \mathbb{N}$,
\begin{equation*}
\mathbf{P}^{+}(\zeta _{k}(a)>k^{\alpha /\gamma })\leq \frac{c}{k^{\alpha
\beta /\gamma }}+\frac{c}{k^{\alpha \beta /\gamma }}\mathbf{P}(L_{k-1}\geq
0)\leq \frac{c}{k^{\alpha \beta /\gamma }}+\frac{c}{k^{\alpha \beta /\gamma }%
}\frac{\hat{l}_{22}(k)}{k},
\end{equation*}%
where we applied \eqref{EstimMin2} and the value of $c$ can change from line
to line. The constants $\alpha $ and $\beta $ are fixed. However, we know
that $\gamma $ can be chosen as small as we want. In particular, we may
select it in such a way that $\alpha \beta /\gamma =2$. Applying again the
Borel-Cantelli lemma we deduce that there is $\mathbf{P}^{+}$--$a.s.$ only a
finite number of cases when $\zeta _{k}(a)>k^{\alpha /\gamma }$.

Combining this fact with the previous results we obtain that for $j$ large
enough and $k\in \lbrack \nu (j-1)+1,\nu (j)]$, $\mathbf{P}^{+}$--$a.s.$,
\begin{equation*}
\zeta _{k}(a)\leq k^{\alpha /\gamma }\leq \left( e^{\gamma j}\right)
^{\alpha /\gamma }=e^{\alpha j}\quad \text{and}\quad \nu _{j}\leq g(j).
\end{equation*}%
Hence for $j$ large enough, $\mathbf{P}^{+}$--$a.s.$,
\begin{equation*}
\sum_{k=\nu (j-1)+1}^{\nu (j)}\zeta _{k}(a)\leq e^{\alpha j}\nu _{j}\leq
e^{\alpha j}g(j).
\end{equation*}

The last part of the proof consists in estimating the $S_{\nu (j)}$ from
below. According to Lemma \ref{L_tanaka} \emph{(1)} and \emph{(2)}, the
random variable $S_{\nu (j)}$ is the sum of $j$ non-negative i.i.d. random
variables with positive mean. Thus, there exists a $\lambda >0$ such that
\begin{equation*}
S_{\nu (j)}\ \geq \ \lambda j\qquad \text{eventually}\quad \mathbf{P}^{+}%
\text{--}a.s.  \label{lln}
\end{equation*}

Choosing $\alpha <\lambda $ in the previous inequalities, we obtain

\begin{align*}
\sum_{k=0}^{\infty }\eta _{k+1}e^{-S_{k}}\leq \mathcal{A}_{a}+\mathcal{B}%
_{a}& \leq c\sum_{j=0}^{\infty }(a+e^{\alpha (j+1)})\nu _{j+1}e^{-S_{v(j)}} \\
& \leq c\sum_{j=0}^{\infty }e^{\alpha (j+1)}g(j+1)e^{-\lambda j}<\infty \qquad
\mathbf{P}^{+}\text{--}a.s.,
\end{align*}%
where the value of $c$ can change from line to line. It ends the proof for
the case $p>q$.

The proof for the case $p<q$ is the same as the proof of Lemma 2.7 in \cite%
{afanasyev2005criticality}. Indeed, even if the authors of the mentioned
paper assume $\rho \in (0,1)$, their proof remains valid when we take $\rho
=1$ as it is the case when $p<q$.
\end{proof}

In the following example, we illustrate the fact that \textbf{Condition\ C}
is not too strong and that we can find slowly varying functions $L(\cdot )$
satisfying this assumption. To this aim we choose $g(j)=e^{j^{1-\theta }}$
for a $\theta \in (0,1)$.

\begin{example}
\label{ex1} Let us consider a slowly varying function $L(\cdot )$ satisfying
\begin{equation*}
L(x)\sim \frac{c}{\log ^{m+1}x},\quad m>(p-q)/p,
\end{equation*}%
where $c$ is a positive constant. Then, as $x\rightarrow \infty $%
\begin{equation*}
l^{\ast }(x)=\int_{x}^{\infty }\frac{L(u)}{u}du\sim \frac{c}{m\log ^{m}x}
\end{equation*}
and, by \eqref{defan}
\begin{equation*}
a_{n}\sim \frac{n}{\log ^{m+1}n}, \quad n\rightarrow \infty .
\end{equation*}%
Therefore,
\begin{equation*}
\left\vert h_{n}\right\vert \sim \left\vert \left( p-q\right) nl^{\ast
}\left( a_{n}\right) \right\vert \sim c_{2}\frac{n}{\log ^{m}n},\quad
n\rightarrow \infty ,
\end{equation*}%
for a positive $c_{2}.$

Now from Lemma 7.3 in \cite{Berg17}, we know that
$$ \mathbf{P}(S_n\geq 0) \sim \frac{p}{p-q} \frac{L(|h_n|)}{l^\ast(|h_n|)}  $$
Hence, using the previous calculations we obtain
$$ \mathbf{P}(S_n\geq 0) \sim \frac{p m}{(p-q)\ln n}.  $$
As
$$ \frac{p m}{(p-q)} \sum_{k=2}^n \frac{1}{k\ln k} \sim \frac{p m}{(p-q)}\ln \ln n, \quad n \to \infty, $$
an application of Corollary 1.7.3 in \cite{bingham1989regular} yields, as $n\to\infty$
$$ \sum_{k=1}^\infty  \frac{\mathbf{P}(S_k \geq 0)}{k}\left(1-\frac{1}{n}\right)^k \sim \frac{p m}{(p-q)} \ln \ln n ,$$
and, in particular, for $\theta\in (0,1)$ as $j\to\infty$
$$ \sum_{k=1}^\infty  \frac{\mathbf{P}(S_k \geq 0)}{k}\left(1-\frac{1}{e^{j^{1-\theta }}}\right)^k \sim \frac{p m}{(p-q)} \ln \ln e^{j^{1-\theta }}
=\frac{p m}{(p-q)}(1-\theta)\ln j.$$
As a consequence, for any $\varepsilon>0$, there exist $j(\varepsilon)$ such that for $j \geq j(\varepsilon)$,
$$ \Lambda (e^{j^{1-\theta }}) \geq j^{(1-\varepsilon)(1-\theta)p m/(p-q)}. $$
As $p m/(p-q)>1$, we just have to choose $\varepsilon > 0$ and $\theta\in (0,1)$ such that $(1-\varepsilon)(1-\theta)p m/(p-q)>1$ to conclude that
\textbf{Condition C}
holds for $p>q.$
\end{example}

Introduce iterations of probability generating functions $\ \
f_{1}(.),f_{2}(.),...$ by setting
\begin{equation*}
f_{k,n}(s):=f_{k+1}(f_{k+2}(\ldots (f_{n}(s))\ldots ))
\end{equation*}%
for $0\leq k\leq n-1$, $0 \leq s \leq 1$, and letting $f_{n,n}(s):=s.$ By
definition,%
\begin{equation*}
\mathbf{P}\left( Z_{n}>0|\ f_{k+1},\ldots ,f_{n};Z_{k}=1\right) =1-f_{k,n}(0)
\end{equation*}%
and we have (see, for instance, formula (3.4) in \cite%
{afanasyev2005criticality})%
\begin{equation*}
1-f_{0,n}(0)\geq \left( e^{-S_{n}}+\sum_{k=1}^{n-1}\eta
_{k+1}e^{-S_{k}}\right) ^{-1}
\end{equation*}%
implying by Lemma \ref{L_borel}
\begin{equation}
1-f_{0,\infty }(0):=\lim_{n\rightarrow \infty }\left( 1-f_{0,n}(0)\right)
\geq \left( \sum_{k=1}^{\infty }\eta _{k+1}e^{-S_{k}}\right) ^{-1}\text{ }%
\mathbf{P}^{+}-\text{a.s.}  \label{Est_f2}
\end{equation}

We finally provide the proof of our main result.

\begin{proof}[Proof of Theorem \protect\ref{T_main}]
Let us begin with the case $p>q$. We write
\begin{eqnarray}
\mathbf{P}\left( Z_{n}>0\right)  &=&\sum_{k=0}^{n}\mathbf{P}\left(
Z_{n}>0;\tau _{n}=k\right)   \notag  \label{majprop3} \\
&=&\sum_{k=0}^{N}\mathbf{E}\left[ 1-f_{0,n}(0);\tau _{n}=k\right]
+\sum_{k=N+1}^{n\varepsilon }\mathbf{E}\left[ 1-f_{0,n}(0);\tau _{n}=k\right]
\notag \\
&&+\sum_{k=n\varepsilon +1}^{n}\mathbf{E}\left[ 1-f_{0,n}(0);\tau _{n}=k%
\right] ,
\end{eqnarray}%
for some $N\in \mathbb{N}$ to be precised later on, and a small positive $%
\varepsilon $. Let us first bound the second term in the right hand side of %
\eqref{majprop3}
\begin{eqnarray*}
\sum_{k=N+1}^{n\varepsilon }\mathbf{E}\left[ 1-f_{0,n}(0);\tau _{n}=k\right]
&\leq &\sum_{k=N+1}^{n\varepsilon }\mathbf{E}\left[ 1-f_{0,k}(0);\tau _{n}=k%
\right]  \\
&=&\sum_{k=N+1}^{n\varepsilon }\mathbf{E}\left[ 1-f_{0,k}(0);\tau _{k}=k%
\right] \mathbf{P}\left( L_{n-k}\geq 0\right)  \\
&\leq &\sum_{k=N+1}^{n\varepsilon }\mathbf{E}\left[ e^{S_{k}};\tau _{k}=k%
\right] \mathbf{P}\left( L_{n-k}\geq 0\right) .
\end{eqnarray*}%
By the duality principle for random walks and Lemma \ref{C_exponential},
with $x=y=0$, we have
\begin{equation*}
\mathbf{E}\left[ e^{S_{k}};\tau _{k}=k\right] =\mathbf{E}\left[
e^{S_{k}};M_{k}<0\right] \leq c\ b_{k}.
\end{equation*}%
This estimate, the equivalence
\begin{equation*}
b_{k}=\frac{1}{ka_{k}}\sim \frac{1}{k^{2}L_{4}(k)}
\end{equation*}%
and \eqref{Ber1} give%
\begin{eqnarray*}
\sum_{k=N+1}^{n\varepsilon }\mathbf{E}\left[ 1-f_{0,n}(0);\tau _{n}=k\right]
&\leq &\mathbf{P}\left( L_{n(1-\varepsilon )}\geq 0\right)
\sum_{k=N+1}^{\infty }\frac{1}{k^{2}L_{4}(k)} \\
&\leq &C\frac{\mathbf{P}\left( L_{n(1-\varepsilon )}\geq 0\right) }{a_{N}}.
\end{eqnarray*}%
Now we focus on the third part of the right hand side of \eqref{majprop3}.
Similarly as for the second part, we have the following series of
inequalities, where the value of the finite constant $C$ may change from
line to line and may depend on $\varepsilon $:
\begin{eqnarray}
\sum_{k=n\varepsilon +1}^{n}\mathbf{E}\left[ 1-f_{0,n}(0);\tau _{n}=k\right]
&\leq &C\sum_{k=n\varepsilon +1}^{n}\frac{1}{k^{2}L_{4}(k)}\mathbf{P}\left(
L_{n-k}\geq 0\right)   \notag  \label{maj_part3} \\
&\leq &C\frac{L_{22}(n)}{n^{2}L_{4}(n)}=o\left( \frac{1}{n^{3/2}}\right) .
\end{eqnarray}%
Finally,
\begin{equation*}
\sum_{k=0}^{N}\mathbf{E}\left[ 1-f_{0,n}(0);\tau _{n}=k\right]
=\sum_{k=0}^{N}\mathbf{E}\left[ 1-f_{k,n}^{Z_{k}}(0);\tau _{k}=k,L_{k,n}\geq
0\right]
\end{equation*}%
where
\begin{equation*}
L_{k,n}=\min_{k\leq j\leq n}\left( S_{j}-S_{k}\right) .
\end{equation*}%
Recalling Lemma \ref{conditioning} and using the independency and
homogeneity of the environmental components we conclude that for $k\leq N$,
\begin{eqnarray*}
\mathbf{E}\left[ 1-f_{k,n}^{Z_{k}}(0);\tau _{k}=k,L_{k,n}\geq 0\right]
&=&\sum_{j=1}^{\infty }\mathbf{P}\left( Z_{k}=j,\tau _{k}=k\right) \mathbf{P}%
\left( L_{n-k}\geq 0\right) \mathbf{E}\left[ 1-f_{0,n-k}^{j}(0)|L_{n-k}\geq 0%
\right]  \\
&\sim &\mathbf{P}\left( L_{n}\geq 0\right) \sum_{j=1}^{\infty }\mathbf{P}%
\left( Z_{k}=j,\tau _{k}=k\right) \mathbf{E}^{+}\left[ 1-f_{0,\infty }^{j}(0)%
\right]
\end{eqnarray*}%
as $n\rightarrow \infty $. Note that by Lemma \ref{L_borel}\ and (\ref%
{Est_f2})
\begin{equation*}
\mathbf{E}^{+}\left[ 1-f_{0,\infty }^{j}(0)\right] \geq \mathbf{E}^{+}\left[
1-f_{0,\infty }(0)\right] \geq \mathbf{E}^{+}\left[ \left(
\sum_{k=0}^{\infty }\eta _{k+1}e^{-S_{k}}\right) ^{-1}\right] >0.
\end{equation*}%
Thus, letting first $n$ to infinity, then $\varepsilon $ to zero and,
finally, $N$ to infinity we prove (\ref{survivP>Q}).

The proof for the case $q>p$ is very similar. The only difference is when
looking for an equivalent of Equation \eqref{maj_part3}. Applying %
\eqref{Ber2} yields
\begin{eqnarray*}
\sum_{k=n\varepsilon +1}^{n}\mathbf{E}\left[ 1-f_{0,n}(0);\tau _{n}=k\right]
&\leq &C\sum_{k=n\varepsilon +1}^{n}\frac{1}{k^{2}L_{4}(k)}\mathbf{P}\left(
L_{n-k}\geq 0\right) \\
&\leq &C_{\varepsilon }\frac{L_{33}(n)}{nL_{4}(n)}=o\left( \frac{1}{n^{1/2}}%
\right) =o\left( \mathbf{P}\left( L_{n}\geq 0\right) \right) .
\end{eqnarray*}%
We end the proof as for \eqref{survivP>Q}.
\end{proof}

\begin{acknow}
The authors thank I. Kortchemski for attracting their attention to reference \cite{kortchemski2019condensation} which
allowed them to weaken the conditions for their main result to hold.
C.S. thanks the CNRS for its financial support through its competitive
funding programs on interdisciplinary research, the ANR ABIM 16-CE40-0001 as
well as the Chair "Mod\'elisation Math\'ematique et Biodiversit\'e" of
VEOLIA-Ecole Polytechnique-MNHN-F.X.
She also thanks the French Embassy in Russia for its financial support via the Andr\'e Mazon program.
\end{acknow}

\end{document}